\newcommand{\mbN}{\mathbb{N}}
\newcommand{\mbQ}{\mathbb{Q}}
\newcommand{\mbR}{\mathbb{R}}
\newcommand{\mbZ}{\mathbb{Z}}
\newcommand{\mbP}{\mathbb{P}}
\newcommand{\mcE}{\mathcal{E}}
\newcommand{\mcF}{\mathcal{F}}
\newcommand{\mcH}{\mathcal{H}}
\newcommand{\mcG}{\mathcal{G}}
\newcommand{\mcJ}{\mathcal{J}}
\newcommand{\mcL}{\mathcal{L}}
\newcommand{\mcO}{\mathcal{O}}
\newcommand{\wtX}{\widetilde{X}}
\newcommand{\ovX}{\overline{X}}
\DeclareMathOperator{\Supp}{Supp}
\DeclareMathOperator{\Spec}{Spec}
\DeclareMathOperator{\Sing}{Sing}
\DeclareMathOperator{\Hom}{Hom}
\DeclareMathOperator{\Pic}{Pic}
\DeclareMathOperator{\Tr}{Tr}
\DeclareMathOperator{\vol}{vol}
\newcommand*{\defeq}{\mathrel{\mathop:}=}
\theoremstyle{plain}
\newtheorem{theorem}{Theorem}[section]
\newtheorem{proposition}[theorem]{Proposition}
\newtheorem{lemma}[theorem]{Lemma}
\newtheorem{corollary}[theorem]{Corollary}
\newtheorem{conj}[theorem]{Conjecture}
\theoremstyle{definition}
\newtheorem{definition}[theorem]{Definition}
\theoremstyle{remark}
\newtheorem{remark}[theorem]{Remark}
\newcommand\myurl[1]{\url{#1}}
\title[Effective bounds in positive characteristic]
{Effective bounds on singular surfaces in positive characteristic} 
\author{Jakub Witaszek} 
\subjclass[2010]{14E30, 13A35, 14B05, 14F17}
\keywords{positive characteristic, F-pure singularity, surface, Fujita conjecture, Kodaira vanishing}
\address{Department of Mathematics, Imperial College, London, 180 Queen's Gate, 
London SW7 2AZ, UK} 
\email{j.witaszek14@imperial.ac.uk}
\begin{document}
\maketitle
\begin{abstract}
Using the theory of Frobenius singularities, we show that $13mK_X + 45mA$ is very ample for an ample Cartier divisor $A$ on a Kawamata log terminal surface $X$ with Gorenstein index $m$, defined over an algebraically closed field of characteristic $p>5$.
\end{abstract}

\section{Introduction}

The positivity of line bundles is a fundamental topic of research in algebraic geometry. Showing the base point freeness or very ampleness of line bundles allows for the description of the geometry of algebraic varieties. 
 
The motivation for this paper centers around two questions. The first one is the following: given an ample Cartier divisor $A$, find an effective $n \in \mbN$ for which $nA$ is very ample. A famous theorem of Matsusaka states that one can find such $n \in \mbN$ which depends only on the Hilbert polynomial of $A$, when the variety is smooth and the characteristic of the field if equal to zero (\cite{Matsusaka}). This theorem plays a fundamental role in constructing moduli spaces of polarized varieties. In positive characteristic, Koll\'{a}r proved the same statement for normal surfaces (\cite[Theorem 2.1.2]{Kollar85}). 

The second question motivating the results of this paper is the famous Fujita conjecture, which, in characteristic zero, is proved only for curves and surfaces. 
\begin{conj}[Fujita conjecture] Let $X$ be a smooth projective variety of dimension $n$, and let $A$ be an ample Cartier divisor on $X$. Then $K_X + (n+2)A$ is very ample.
\end{conj}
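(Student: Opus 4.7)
The plan would be to split by dimension, since the conjecture is known only for $n = 1, 2$ in characteristic zero and open in general. For $n = 1$ it is immediate: $\deg(K_X+3A) = 2g(X)-2+3\deg A \geq 2g(X)+1$ since $\deg A \geq 1$, and any line bundle of degree at least $2g+1$ on a smooth curve is very ample. For $n = 2$ I would appeal to Reider's theorem, which reduces very ampleness of $K_X+4A$ to the numerical conditions $(4A)^2 \geq 10$ and $(4A)\cdot C \geq 3$ for every irreducible curve $C$; these follow from $A^2 \geq 1$ and $A\cdot C \geq 1$.

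In dimension $n \geq 3$ I would attempt the multiplier ideal / Kawamata--Angehrn--Siu approach. Given a point $x \in X$, the idea is to produce a $\QQ$-divisor $D \Lq \lambda A$ with $\lambda < n+2$ that is sufficiently singular at $x$ (and at a second point $y$, for separating points), perturb $D$ so that the non-klt locus of $(X, D)$ has a unique minimal log canonical center $Z$ through $x$, and then apply Kawamata--Viehweg vanishing to the twist $K_X+(n+2)A-D$ together with a Nadel-type vanishing and subadjunction argument to lift sections from $Z$ to $X$. Iteratively shrinking the dimension of $Z$ should eventually yield separation of points and tangent vectors.

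The main obstacle is the sharpness of the coefficient $n+2$. Current techniques produce divisors $D$ with the required singularity only when $\lambda$ is of order $\binom{n+1}{2}$: the best known unconditional result, due to Angehrn--Siu, gives base point freeness of $K_X + \binom{n+1}{2}A$, and no method is known to push the constant down to Fujita's conjectural values $n+1$ (for global generation) and $n+2$ (for very ampleness) once $n \geq 3$. A further obstacle in positive characteristic is the failure of Kawamata--Viehweg vanishing itself, which invalidates the multiplier ideal machinery altogether; this is precisely what motivates the Frobenius-theoretic approach of the present paper, where effective (though non-optimal) bounds are sought on singular surfaces.
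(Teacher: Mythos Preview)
The statement you were asked to prove is the Fujita \emph{conjecture}, and the paper does not prove it: it is stated there purely as motivation, with the explicit remark that in characteristic zero it is known only for curves and surfaces, and in positive characteristic only for curves and for surfaces that are neither of general type nor quasi-elliptic. There is therefore no proof in the paper to compare your proposal against.

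Your write-up is an accurate summary of the state of the art rather than a proof, and you seem aware of this. The arguments you give for $n=1$ and $n=2$ in characteristic zero are correct (the degree count on curves, and Reider's theorem on surfaces). Your outline of the Angehrn--Siu multiplier-ideal strategy for $n\geq 3$ is also accurate, as is your identification of the two main obstacles: the quadratic growth $\binom{n+1}{2}$ in the coefficient that current singularity-construction techniques force, and the failure of Kawamata--Viehweg vanishing in positive characteristic. None of this constitutes a proof of the conjecture, but you have not claimed otherwise.
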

Fujita-type results play a vital role in understanding the geometry of algebraic varieties.

In positive characteristic, the conjecture is known only for curves, and those surfaces which are neither of general type, nor quasi-elliptic. This follows from a result of Shepherd-Barron which says that on such surfaces, rank two vector bundles which do not satisfy Bogomolov inequality are unstable (\cite[Theorem 7]{Shepherd-Barron}). Indeed, the celebrated proof by Reider of the Fujita conjecture for characteristic zero surfaces, can be, in such a case, applied without any modifications (see \cite{Terakawa}, \cite{Reider}).

	Given lack of any progress for positive characteristic surfaces of general type, Di Cerbo and Fanelli undertook a different approach to the problem (\cite{DiCerboFanelli}). They proved among other things that $2K_X + 4A$ is very ample, where $A$ is ample, and $X$ is a smooth surface of general type in characteristic $p\geq 3$. 
	
In this paper, we consider the aforementioned questions for singular surfaces. As far as we know, no effective bounds for singular surfaces in positive characteristic have been obtained before. The main theorem is the following.
 
\begin{theorem} \label{theorem:main-mini} Let $X$ be a projective surface with Kawamata log terminal singularities defined over an algebraically closed field of characteristic $p>5$. Assume that $mK_X$ is Cartier for some $m \in \mbN$. Then, for an ample Cartier divisor $A$:
\begin{itemize} 
	\item $4mK_X+14mA$ is base point free, and
	\item $13mK_X+45mA$ is very ample.
\end{itemize}
\end{theorem}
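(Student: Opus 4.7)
The plan is to prove both statements by establishing a Reider-type criterion for klt surfaces in characteristic $p > 5$, and then applying it to the specific divisors $4mK_X+14mA$ and $13mK_X+45mA$. The Reider-type input relies on a Kawamata-Viehweg-style vanishing theorem valid for klt surfaces in this characteristic range, available because klt surface singularities are strongly F-regular when $p > 5$; this replaces the Kodaira vanishing used in Reider's original characteristic-zero argument.

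For base point freeness at a point $x \in X$, I would write $4mK_X + 14mA = K_X + L$ with $L := (4m-1)K_X + 14mA$ and argue by contradiction. Assuming $K_X + L$ is not base point free at $x$, a Koll\'ar-style construction produces, via Riemann--Roch on a log resolution, an effective $\mbQ$-divisor $D \sim_{\mbQ} (1 - \epsilon)L$ such that the pair $(X,D)$ is log canonical but not klt, with $x$ isolated in the non-klt locus. The pair vanishing theorem then lifts a section from this non-klt locus, contradicting the assumption. The only surviving possibility is a Reider-type ``bad curve'' $C$ with small $L \cdot C$ and $C^2$; the hypothesis that $A$ is ample Cartier on an $m$-Gorenstein surface yields $A \cdot C \geq 1/m$ and $A^2 \geq 1/m^2$ after clearing denominators, and the coefficients $4$ and $14$ are calibrated so that these bounds force $L \cdot C$ and $L^2$ to exceed Reider's thresholds.

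For very ampleness, I would iterate the same machinery: separating two distinct points and separating tangent directions at each point both require constructing $D$ with prescribed higher-order non-klt behaviour, and thus greater multiplicity at the distinguished points along with a more positive $L$. The jump from $(4,14)$ to $(13,45)$ reflects the combined cost of handling a pair of points rather than a single one and of separating tangent vectors, together with the fractional corrections forced on discrepancies because $K_X$ is only $\mbQ$-Cartier of index $m$.

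The main obstacle is the Kawamata-Viehweg vanishing step: in positive characteristic it is not automatic, and must be imported from F-singularity theory, which is exactly what fails below $p=5$ for some klt surface singularities and is the reason for the characteristic hypothesis. Secondary technicalities include bookkeeping on a log resolution, tracking discrepancies of klt singularities through the construction of $D$, and balancing the multiplicities needed to force non-kltness against the self-intersection lower bounds needed to exclude bad curves; keeping all these estimates sharp is what produces the specific effective constants $4$, $14$, $13$, and $45$.
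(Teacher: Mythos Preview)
Your plan rests on a global Kawamata--Viehweg / Nadel vanishing theorem for klt surfaces in characteristic $p>5$, and this is precisely the step that is not available. Hara's theorem says that klt surface singularities are strongly $F$-regular when $p>5$; that is a \emph{local} statement about the singularities, and it does not imply any global cohomology vanishing on $X$. Kodaira vanishing already fails on smooth projective surfaces in positive characteristic (Raynaud's examples), so there is no hope of deducing the Nadel-type vanishing you need from the singularity class alone. Without that vanishing, the extension step in your Reider/Koll\'ar argument---lifting a section from the non-klt locus of $(X,D)$---collapses, and the rest of the argument does not go through. The same issue applies to your very-ampleness step.

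For comparison, the paper's proof is organised quite differently and is designed exactly to avoid any appeal to global KV vanishing. It proceeds in three stages. First, a Reider-type analysis (following Di Cerbo--Fanelli) is run on the \emph{minimal resolution} $\overline{X}\to X$, where one can exploit Bogomolov/Shepherd-Barron instability arguments for smooth surfaces; this shows that $\mathrm{Bs}(m(2K_X+7A)+N)\subseteq\Sing(X)$. Second, the remaining singular base points are eliminated by the Cascini--Tanaka--Xu technique: one chooses two divisors $D_1,D_2\in|K_X+L|$ meeting in a zero-dimensional scheme through the singular point, pushes the $F$-pure threshold along $D_1,D_2$ to produce an $F$-pure pair $(X,\Delta)$, and then uses surjectivity of the trace map $\Tr^e_{X,\Delta}$ together with Fujita vanishing (for $e\gg 0$) to lift sections. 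This is where $F$-purity (hence $p>5$) is actually used, and it replaces Nadel vanishing rather than proving it. Third, very ampleness is obtained not by iterating Reider but by a Mumford-regularity argument (after Keeler/Schwede): once $4mK_X+14mA$ is globally generated and ample, $K_X+3(4mK_X+14mA)+D$ is very ample for suitable ample $D$, which is how the constants $13$ and $45$ arise.
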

In particular, Kawamata log terminal surfaces with $K_X$ ample, and defined in characteristic $p>5$, satisfy that $58mK_X$ is very ample, where $m$ is the Gorenstein index. 

The bounds are not sharp. See Theorem \ref{theorem:main} for a slightly more general statement. Instead of assuming that $X$ is Kawamata log terminal and $p>5$, it is enough to assume that $X$ is $F$-pure and $\mbQ$-factorial. The theorem also holds in characteristic zero, but in such a case the existence of effective bounds follows in an easier way by Koll\'{a}r's effective base point free theorem (\cite{kollareffective}) and the Kodaira vanishing theorem.

The proof consists of three main ingredients. First, we apply the result of Di Cerbo and Fanelli on a desingularization of $X$. This shows that the base locus of $2mK_X + 7mA$ is zero dimensional. Then, we apply the technique of Cascini, Tanaka and Xu (see \cite[Theorem 3.8]{CasciniTanakaXu}) to show that the base locus of $2(2mK_X + 7mA)$ is empty. Therewith, the very ampleness of $13mK_X + 45mA$ follows from a generalization of a result of Keeler (\cite[Theorem 1.1]{Keeler}) in the case of $F$-pure varieties.

As far as we know, after the paper of Cascini, Tanaka and Xu had been announced, no one has yet applied their technique of constructing $F$-pure centers. We believe that down-to-earth examples provided in our paper may be suitable as a gentle introduction to some parts of their prolific paper, \cite{CasciniTanakaXu}.\\

As a corollary to the main theorem, we obtain the following Matsusaka-type bounds.
\begin{corollary} \label{corollary:Matsusaka-mini}Let $A$ and $N$ be, respectively, an ample and a nef Cartier divisor on a Kawamata log terminal projective surface defined over an algebraically closed field of characteristic $p>5$. Let $m\in \mbN$ be such that $mK_X$ is Cartier. Then $kA - N$ is very ample for any
\[
k > \frac{2A \cdot (H+N)}{A^2}((K_X +3A)
\cdot A + 1),
\]
where $H \defeq 13mK_X+45mA$.

\end{corollary}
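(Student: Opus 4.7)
The idea is to run a classical effective Matsusaka-type argument on the normal surface $X$, using the very ample divisor $H = 13mK_X + 45mA$ supplied by Theorem~\ref{theorem:main-mini} as the reference polarization. Since $H$ is very ample, by Bertini a general member $C \in |H|$ is a smooth irreducible curve, and adjunction gives $2g(C) - 2 = (K_X + H) \cdot H$.

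The method relies on the restriction exact sequence
\begin{equation*}
0 \to \mcO_X(kA - N - H) \to \mcO_X(kA - N) \to \mcO_C((kA - N)|_C) \to 0,
\end{equation*}
together with the analogous sequences obtained by twisting with the ideal sheaves of a single point, a pair of points, or a length-two subscheme, in order to force separation of points and tangent vectors. From these sequences the very ampleness of $kA - N$ on $X$ reduces to (i) the very ampleness of $(kA - N)|_C$ on $C$, and (ii) the vanishing of $H^1(X, \mcO_X(kA - N - H))$ and its ideal-sheaf twisted variants. For (i), the restriction $(kA-N)|_C$ has degree $k\, A \cdot H - N \cdot H$, and is very ample as soon as this degree is at least $2g(C) + 1$. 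For (ii), $kA - N - H$ must be suitably positive; in the $F$-pure setting with $p > 5$, Fujita-type vanishing is available once the intersection number $(kA - N - H) \cdot A$ exceeds an explicit threshold, exactly of the form controlled by $2A \cdot (H+N)/A^2$.

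The bound in the statement is designed so that both (i) and (ii) hold simultaneously: the factor $\frac{2A \cdot (H+N)}{A^2}$ will capture the positivity threshold needed for the vanishing and the surjectivity of restriction maps, while the factor $(K_X + 3A) \cdot A + 1$ will capture a Castelnuovo-regularity count arising from adjunction on curves (either on $C$, or on a further smooth curve cut out by $A$ after iteration). The main obstacle will be combinatorial rather than conceptual: I will have to track all intersection numbers precisely so that the multiplicative form $k > \frac{2A \cdot (H+N)}{A^2}\bigl((K_X + 3A) \cdot A + 1\bigr)$ emerges exactly as stated, which will require iterating the restriction argument and invoking the appropriate Fujita-type vanishing (in the spirit of the generalisation of Keeler's theorem \cite{Keeler} used in the proof of Theorem~\ref{theorem:main-mini}) at each step.
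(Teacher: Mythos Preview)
Your plan misses the paper's actual mechanism, and the approach you outline has a genuine gap.

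The paper's proof is two lines. The key observation, which you do not make, is that the argument establishing Theorem~\ref{theorem:main-mini} (via Propositions~\ref{proposition:part_one}, \ref{proposition:part_two}, \ref{proposition:very_ampleness}) in fact shows the stronger statement that $H + N'$ is very ample for \emph{every} nef Cartier divisor $N'$. Granting this, the corollary follows immediately from a nefness criterion: one shows (following \cite[Theorem~3.3]{DiCerboFanelli}, using only the cone theorem on the normal surface) that $kA - M$ is nef whenever $k \geq \frac{2A\cdot M}{A^2}\bigl((K_X+3A)\cdot A + 1\bigr) + 1$, for any nef Cartier $M$. Applying this with $M = H+N$ gives that $kA - (H+N)$ is nef for $k$ in the stated range, and then
\[
kA - N \;=\; H + \bigl(kA - (H+N)\bigr)
\]
is very ample by the strengthened main theorem. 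The specific multiplicative shape of the bound is precisely the shape of this nefness threshold; it is not a Castelnuovo--Mumford regularity count.

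Your restriction-to-curve plan, by contrast, hinges on an effective vanishing of $H^1\bigl(X, \mcO_X(kA - N - H)\otimes I_Z\bigr)$ for length-two $Z$, and you appeal to ``Fujita-type vanishing once the intersection number exceeds an explicit threshold''. But Fujita vanishing (Theorem~\ref{theorem:fujita_vanishing}) is \emph{non-effective}: it gives no control on the constant $m(\mcF,H)$. In characteristic $p$ there is no Kodaira/Nadel vanishing to fall back on, and the generalisation of Keeler's result you cite (Proposition~\ref{proposition:very_ampleness}) is a very-ampleness statement, not an $H^1$-vanishing statement. So step~(ii) of your plan is unjustified as written, and even if you patched it with the effective vanishing of Proposition~\ref{proposition:effective_vanishing}, the bound you would extract from a hyperplane-section iteration would not match the stated one. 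The intended route is to exploit the ``$+N$'' flexibility already built into the proof of the main theorem and reduce everything to a nefness question.
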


One of the fundamental conjectures in birational geometry is Borisov-Alexeev Boundedness conjecture, which says that $\epsilon$-klt log Fano varieties are bounded. In dimension two it was proved by Alexeev (\cite[Theorem 6.9]{Alexeev}).  One of the ingredients of the proof is the beforementioned result about boundedness of polarized surfaces by Koll\'{a}r (\cite[Theorem 2.1.2]{Kollar85}). Further, explicit bounds on the volume have been obtained by Lai (\cite[Theorem 4.3]{Lai}) and Jiang (\cite[Theorem 1.3]{Jiang}). 

For characteristic $p>5$, in the proof of the boundedness of $\epsilon$-klt log del Pezzo pairs, we can replace Koll\'{a}r's result by Theorem \ref{theorem:main-mini}, and hence obtain rough, but explicit bounds on the size of the bounded family. 

\begin{corollary}\label{corollary:boundedness_of_log_del_Pezzo} 
For any $\epsilon \in \mbR_{>0}$ and a finite set $I \subseteq [0,1] \cap \mbQ$, there exist effectively computable natural numbers $b(\epsilon,I)$ and  $n(\epsilon,I)$ satisfying the following property. 

Let $(X,\Delta)$ be an $\epsilon$-klt log del Pezzo surface with the coefficients of $\Delta$ contained in $I$, defined over an algebraically closed field of characteristic $p>5$. Then, there exists a very ample divisor $H$ on $X$ such that $H^i(X,H) = 0$ for $i>0$ and
\[
|H^2|, |H \cdot K_X|, |H\cdot \Delta|, |K_X \cdot \Delta|, |\Delta^2|
\]
are smaller than $b(\epsilon,I)$. Further, $H$ embeds $X$ into $\mbP^k$, where $k \leq b(\epsilon, I)$, and $n(\epsilon,I)\Delta$ is Cartier.
\end{corollary}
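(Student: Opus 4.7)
The plan is to make Alexeev's proof of the two-dimensional Borisov--Alexeev--Borisov conjecture effective by substituting Theorem~\ref{theorem:main-mini} for Koll\'ar's boundedness result at its crucial step. Once a very ample divisor $H$ on $X$ is written down explicitly and $(-K_X)^2$ is controlled, all the listed invariants become intersection numbers on a single polarized surface and can be estimated directly.

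Concretely, I would first invoke two effective auxiliary bounds. (i)~For every $\epsilon$-klt surface singularity in characteristic $p>5$, the local Weil divisor class group has bounded exponent (these singularities are quotients by finite group schemes of bounded order), producing $m=m(\epsilon)\in\mbN$ so that $mD$ is Cartier for every Weil divisor $D$ on $X$. (ii)~By the arguments of Alexeev, Lai~\cite{Lai} and Jiang~\cite{Jiang}, transferred to characteristic $p>5$ (where the surface minimal model program is available), there is an effective $V=V(\epsilon,I)$ with $(-K_X)^2\le V$ and $(-(K_X+\Delta))^2\le V$. Now set $A\defeq -mK_X$, an ample Cartier divisor by~(i), and apply Theorem~\ref{theorem:main-mini} to conclude that
\[
H\defeq 13mK_X+45mA=-m(45m-13)K_X
\]
is very ample. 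A direct expansion gives $H^2=m^2(45m-13)^2(-K_X)^2$ and $|H\cdot K_X|=m(45m-13)(-K_X)^2$, both at most $m^2(45m-13)^2 V$. Since $-(K_X+\Delta)$ is nef and $H$ ample, $H\cdot\Delta\le -H\cdot K_X$; the same nefness gives $0\le (-K_X)\cdot\Delta\le (-K_X)^2\le V$, so $|K_X\cdot\Delta|\le V$; and then the identity $(K_X+\Delta)^2=K_X^2+2K_X\cdot\Delta+\Delta^2$ delivers an effective bound on $|\Delta^2|$.

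For the remaining pieces, note that $H-K_X=-(m(45m-13)+1)K_X$ is an ample $\mbQ$-Cartier divisor, so Kawamata--Viehweg vanishing for $\mbQ$-factorial klt surfaces in characteristic $p>5$, in the form used in \cite{CasciniTanakaXu}, yields $H^i(X,H)=0$ for $i>0$. Riemann--Roch then computes $h^0(X,H)=\chi(\mcO_X)+\tfrac{1}{2}H\cdot(H-K_X)$, which is effectively bounded (klt log del Pezzo surfaces are rational, so $\chi(\mcO_X)=1$), hence so is the embedding dimension $k=h^0(X,H)-1$. For the Cartier index of $\Delta$, let $N_I$ be a common denominator of the coefficients in $I$; then $n(\epsilon,I)\defeq mN_I$ satisfies that $n\Delta$ is Cartier, since $N_I\Delta$ is an integral Weil divisor on $X$. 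The main obstacle is extracting the quantitative form of the auxiliary ingredients from the literature: the explicit bound $m(\epsilon)$ on local Cartier indices of $\epsilon$-klt surface singularities in characteristic $p>5$, and the precise vanishing theorem needed in this setting, both of which must be tracked carefully to make the constants $b(\epsilon,I)$ and $n(\epsilon,I)$ genuinely computable.
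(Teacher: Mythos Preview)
Your overall strategy matches the paper's, but there is a genuine gap: you set $A\defeq -mK_X$ and assert it is ample. For a log del Pezzo pair $(X,\Delta)$ only $-(K_X+\Delta)$ is ample by definition; the divisor $-K_X=-(K_X+\Delta)+\Delta$ is big, but need not be nef. For instance, on the Hirzebruch surface $\mathbb{F}_n$ with $n\geq 3$, negative section $C_0$ and fibre $f$, one has $-K_X\cdot C_0=2-n<0$, yet $(\mathbb{F}_n,\,aC_0)$ is a smooth (hence $\epsilon$-klt for small $\epsilon$) log del Pezzo pair whenever $1-\tfrac{2}{n}<a<1$. In that case Theorem~\ref{theorem:main-mini} cannot be applied with your choice of $A$, and the whole chain of intersection estimates that follows collapses. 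The fix is exactly what the paper does: take $A\defeq -m(K_X+\Delta)$, so that $H=13mK_X-45m^2(K_X+\Delta)$. This costs you something, though: $H$ is now a combination of $K_X$ and $K_X+\Delta$, so bounding $H^2$ and $H\cdot K_X$ requires separate effective control of $(K_X+\Delta)^2$, $(K_X+\Delta)\cdot K_X$, and $K_X^2$. The paper supplies the latter two in Lemma~\ref{lemma:log_del_pezzo_second_bounds} (via the cone theorem and a Picard-rank bound on the minimal resolution), and the first via Proposition~\ref{proposition:log_del_pezzo_first_bounds}(a).

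A smaller divergence: for $H^i(X,H)=0$ you invoke Kawamata--Viehweg vanishing on klt surfaces in characteristic $p>5$, while the paper instead proves an unconditional effective vanishing (Proposition~\ref{proposition:effective_vanishing}) using Koll\'ar's bend-and-break criterion from \cite{kollarcurves}. Your route is legitimate if you are willing to import that vanishing theorem, but the paper's argument has the advantage of being self-contained and characteristic-free. The remaining ingredients you list (bounded local Cartier index from the $\epsilon$-klt condition, Riemann--Roch for the embedding dimension, clearing denominators for $n(\epsilon,I)\Delta$) are all in line with the paper's proof.
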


The paper is organized as follows.

The second section consists of basic preliminaries. In the third section we give a proof of the base point free part of the main theorem. In the fourth section, we prove a technical generalization of the main theorem to arbitrary characteristic. In the fifth section we show Matsusaka-type bounds (Corollary \ref{corollary:Matsusaka-mini}). In the sixth section we derive effective bounds on log del Pezzo pairs.\\

As far as we are concerned, the best source of knowledge about Frobenius singularities are unpublished notes of Schwede \cite{SchwedeUtah}. We also recommend \cite{SchwedeTucker}.

The readers interested in Matsusaka theorem and Fujita-type theorems are encouraged to consult \cite[Section 10.2 and 10.4]{Laz2}. A proof of Reider's theorem for normal surfaces in characteristic zero may be found in \cite{Sakai}. Certain other Fujita-type bounds for singular surfaces in characteristic zero are obtained in \cite{LangerAdjoint}. The effective base point free theorem in characteristic zero is proved in \cite{kollareffective}. Various results on the base point free theorem for surfaces in positive characteristic may be found in \cite{tanakaxmethod} and \cite{mnw}.



\section{Preliminaries}
We always work over an algebraically closed field $k$ of positive characteristic $p>0$.

We refer to \cite{KollarMori} for basic results and basic definitions in birational geometry like log discrepancy or Kawamata log terminal singularities. We say that a pair $(X,\Delta)$ is a \emph{log Fano} pair if $-(K_X + \Delta)$ is ample. In the case when $\dim(X)=2$, we say that $(X,\Delta)$ is a \emph{log del Pezzo} pair.

A pair $(X,B)$ is \emph{$\epsilon$-klt}, if the log discrepancy along any divisor is greater than $\epsilon$. Note that the notion of being $0$-klt is equivalent to klt. 

 The \emph{Cartier index} of $(X,\Delta)$ is the minimal number $m \in \mbN$ such that $m(K_X +\Delta)$ is Cartier. If $(X,\Delta)$ is klt, then it must be $1/m$-klt.

Recall that any Kawamata log terminal surface has rational singularities, and is, in particular, $\mbQ$-factorial (see \cite{Tanaka2dim}). 
	
	We denote the base locus of a line bundle $\mcL$ by $\mathrm{Bs}(\mcL)$. Note, that by abuse of notation, we use the notation for line bundles and the notation for divisors interchangeably.

	We will repeatedly use, without mentioning directly, that $K_X + 3A$ is nef, where $X$ is a projective normal surface and $A$ is an ample Cartier divisor. This is a direct consequence of the cone theorem (\cite[Proposition 3.15]{Tanaka2dim}). \\

	The following facts are used in the proofs in this paper.
	
\begin{theorem}[{Mumford regularity, \cite[Theorem 1.8.5]{Laz1}}] \label{theorem:fujita}
 Let $X$ be a projective variety, and let $M$ be a globally generated ample line bundle on $X$. Let $\mcF$ be a coherent sheaf on $X$ such that $H^i(X, \mcF \otimes M^{-i})=0$ for $i>0$. Then $\mcF$ is globally generated.
\end{theorem}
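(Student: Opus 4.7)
The statement is the standard Castelnuovo--Mumford regularity theorem in Lazarsfeld's formulation. My plan is to reduce it to the classical version on projective space via the finite morphism defined by the complete linear system $|M|$, exploiting that the hypothesis is stable under pushforward along a finite map.

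Setup: Since $M$ is globally generated, $|M|$ defines a morphism $\phi\colon X\to \mbP^N$ with $N+1=h^{0}(X,M)$ and $\phi^{*}\mcO_{\mbP^N}(1)\cong M$. Because $M$ is ample it cannot be trivial on any positive-dimensional subscheme, so every fibre of $\phi$ is zero-dimensional; being proper with finite fibres, $\phi$ is finite. In particular $\phi$ is affine, so $R^{i}\phi_{*}\mcF=0$ for $i>0$, and combining the degenerate Leray spectral sequence with the projection formula gives
\[
H^{i}\bigl(\mbP^{N},\,\phi_{*}\mcF\otimes \mcO_{\mbP^{N}}(k)\bigr)\;\cong\; H^{i}(X,\,\mcF\otimes M^{k})
\]
for all $i,k$. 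The hypothesis on $\mcF$ then translates precisely to $\phi_{*}\mcF$ being classically $0$-regular on $\mbP^{N}$.

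Next I invoke the classical Castelnuovo--Mumford regularity theorem on $\mbP^{N}$ to conclude that $\phi_{*}\mcF$ is globally generated; that is, there is a surjection $H^{0}(\mbP^{N},\phi_{*}\mcF)\otimes \mcO_{\mbP^{N}}\twoheadrightarrow \phi_{*}\mcF$. Pulling back via $\phi^{*}$ (which is right exact) and composing with the adjunction counit $\phi^{*}\phi_{*}\mcF\to \mcF$---which is surjective for the finite morphism $\phi$, as it is locally the action map $M\otimes_{A}B\to M$, $m\otimes b\mapsto bm$---one obtains the desired surjection $H^{0}(X,\mcF)\otimes \mcO_{X}\twoheadrightarrow \mcF$ after identifying $H^{0}(\mbP^{N},\phi_{*}\mcF)=H^{0}(X,\mcF)$. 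Hence $\mcF$ is globally generated.

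The real substance is hidden in the classical statement on $\mbP^{N}$, which I regard as the main obstacle. Mumford's proof proceeds by induction on $N$: choose a general hyperplane $H\cong\mbP^{N-1}$ transverse to the associated primes of $\phi_{*}\mcF$, use the short exact sequence $0\to \phi_{*}\mcF(-1)\to \phi_{*}\mcF\to \phi_{*}\mcF|_{H}\to 0$, and combine the inductive hypothesis on $H$ with Mumford's lemma---propagation of $m$-regularity to $(m+k)$-regularity for all $k\geq 0$ and surjectivity of the multiplication $H^{0}(\phi_{*}\mcF(k))\otimes H^{0}(\mcO(1))\to H^{0}(\phi_{*}\mcF(k+1))$---together with Serre vanishing for large twists. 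The point of reducing to $\mbP^{N}$ rather than running the induction directly on $X$ is exactly that Mumford's multiplication step requires the surjectivity of $H^{0}(M)\to H^{0}(M|_{D})$ for a general divisor $D\in|M|$: on $\mbP^{N}$ this is automatic from $H^{1}(\mbP^{N},\mcO)=0$, whereas on a general $X$ there is no such vanishing available.
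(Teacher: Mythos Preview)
The paper does not give its own proof of this statement; it simply cites \cite[Theorem 1.8.5]{Laz1} as a background result. Your argument is correct and is essentially the proof given in Lazarsfeld: reduce to the classical Castelnuovo--Mumford theorem on $\mbP^N$ via the finite morphism $\phi$ defined by $|M|$, using that pushforward along a finite map preserves the regularity hypothesis and that the counit $\phi^{*}\phi_{*}\mcF\to\mcF$ is surjective.
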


\begin{theorem}[{Fujita vanishing, \cite[Theorem 1.4.35]{Laz1} and \cite[Remark 1.4.36]{Laz1}}] \label{theorem:fujita_vanishing} Let $X$ be a projective variety, and let $H$ be an ample divisor on $X$. Given any coherent sheaf $\mcF$ on $X$, there exists an integer $m(\mcF,H)$ such  that
\[
H^i\big(X, \mcF \otimes \mcO_X(mH + D)\big) = 0,
\]
for all $i>0$, $m \geq m(\mcF,H)$ and any nef Cartier divisor $D$ on $X$.
\end{theorem}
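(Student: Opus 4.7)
The plan is to deduce Fujita vanishing from classical Serre vanishing via an induction on the dimension of $\Supp(\mcF)$, with the inductive step designed so that the threshold in $m$ is extracted uniformly in the nef divisor $D$.

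After replacing $H$ by a sufficiently high multiple we may assume $H$ is very ample, since this substitution only rescales the final threshold. A dévissage argument then reduces the statement to the case $\mcF = \mcO_V$ for an integral subvariety $V \subseteq X$: any coherent sheaf admits a finite filtration whose successive subquotients have this form twisted by a line bundle, and the twists can be absorbed into the threshold $m(\mcF,H)$. We then induct on $\dim V$, the case $\dim V = 0$ being immediate.

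For the inductive step, pick $Y \in |H|$ meeting $V$ properly and consider the short exact sequence
\[
0 \to \mcO_V(mH + D) \to \mcO_V((m+1)H + D) \to \mcO_{V \cap Y}((m+1)H + D) \to 0.
\]
The inductive hypothesis applied to $V \cap Y$ supplies an integer $m_1$, depending only on $V \cap Y$ and $H$, past which all higher cohomology of the rightmost term vanishes for every nef $D$. Chasing the long exact sequence, for $i \geq 2$ we obtain isomorphisms $H^i(\mcO_V(mH+D)) \cong H^i(\mcO_V((m+1)H+D))$ whenever $m \geq m_1 - 1$, so these cohomologies stabilise in $m$; classical Serre vanishing applied with $D$ held fixed forces the stable value to be zero, giving the desired uniform vanishing.

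The main obstacle is the case $i = 1$, in which the long exact sequence only yields surjections $H^1(\mcO_V(mH+D)) \twoheadrightarrow H^1(\mcO_V((m+1)H+D))$, so stabilisation alone is insufficient. I would circumvent this by reformulating the induction in terms of asymptotic Castelnuovo--Mumford regularity with respect to $H$: namely, prove that there exists an integer $d$, depending on $V$ and $H$ but not on $D$, such that $\mcO_V(dH+D)$ is $0$-regular for every nef $D$, which simultaneously encodes the vanishing of $H^i(\mcO_V((d+\ell)H + D))$ for all $i>0$ and all $\ell \geq 0$. Carefully propagating this $D$-independent regularity threshold through the dévissage and the dimension induction — so that uniformity in $D$ is preserved at every step — is where I expect the bulk of the technical care to reside.
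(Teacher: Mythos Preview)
The paper does not give its own proof of this theorem; it is quoted from \cite{Laz1} as a known preliminary, so there is nothing in the paper to compare your argument against. That said, let me comment on the proposal itself.

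Your outline is sound through the reduction steps and the $i\geq 2$ case: the stabilisation-plus-Serre argument there is correct. You also correctly isolate $i=1$ as the crux. But the proposed fix does not close the gap. Reformulating the inductive hypothesis as ``there exists $d$, independent of $D$, such that $\mcO_V(dH+D)$ is $0$-regular for every nef $D$'' is logically equivalent to the statement you are trying to prove: $0$-regularity means precisely $H^i(\mcO_V((d-i)H+D))=0$ for $i>0$, and the forward propagation $d\mapsto d+1$ is just Mumford's lemma. The inductive step from $V\cap Y$ to $V$ is unchanged by this rephrasing, and the identical obstruction reappears: the long exact sequence still yields only surjections on $H^1$, with kernel the cokernel of the restriction $H^0(\mcO_V((m{+}1)H+D))\to H^0(\mcO_{V\cap Y}((m{+}1)H+D))$. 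You know the sequence $h^1(\mcO_V(mH+D))$ is non-increasing in $m$, but you have neither bounded its initial value uniformly in $D$ nor shown the surjections are eventually strict, so nothing forces it to reach zero by a $D$-independent stage. A characteristic-free proof (which is what is needed here, since the paper works in positive characteristic) requires an idea beyond the naive hyperplane induction; in characteristic zero Lazarsfeld invokes vanishing theorems after reducing to line bundles, while Fujita's original argument supplies a different mechanism in arbitrary characteristic. As written, your sketch locates the difficulty accurately but does not resolve it.
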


\begin{theorem}[Log-concavity of volume] \label{theorem:volume} For any two big Cartier divisors $D_1$ and $D_2$ on a normal variety $X$ of dimension $n$ we have
\[
\vol(D_1 +D_2)^{\frac{1}{n}} \geq \vol(D_1)^{\frac{1}{n}} + \vol(D_2)^{\frac{1}{n}}.
\]
\end{theorem}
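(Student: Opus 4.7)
The plan is to reduce to the case of nef (or ample) $\mbQ$-divisors via Fujita's approximation theorem, and then invoke the Khovanskii--Teissier inequality for intersection numbers of nef divisors. Volumes are birationally invariant under pullback by proper birational morphisms, so I may first pass to a resolution (or at least a normal projective model on which suitable approximations live).

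Next, I would apply Fujita approximation, which holds in arbitrary characteristic: for every $\epsilon > 0$, there exists a projective birational model $\mu \colon Y \to X$ and $\mbQ$-divisor decompositions
\[
\mu^* D_i \sim_{\mbQ} A_i + E_i \qquad (i=1,2),
\]
where each $A_i$ is an ample $\mbQ$-divisor and each $E_i$ is effective, with $\vol(A_i) = A_i^n > \vol(D_i) - \epsilon$. Summing these decompositions gives $\mu^*(D_1 + D_2) \sim_{\mbQ} (A_1 + A_2) + (E_1 + E_2)$ with $E_1 + E_2$ effective, so
\[
\vol(D_1 + D_2) \;\geq\; \vol(A_1 + A_2) \;=\; (A_1 + A_2)^n.
\]

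The final ingredient is the Brunn--Minkowski / Khovanskii--Teissier inequality for nef $\mbQ$-divisors on an $n$-dimensional projective variety:
\[
\bigl((A_1 + A_2)^n\bigr)^{1/n} \;\geq\; (A_1^n)^{1/n} + (A_2^n)^{1/n}.
\]
Combining with the estimate above and letting $\epsilon \to 0$ yields the stated inequality.

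The main technical obstacle is ensuring both key ingredients survive in positive characteristic. Fujita approximation is characteristic-free (the original algebraic argument using multiplier-ideal-free methods of Takagi, or equivalently the Nakayama/Lazarsfeld approach, applies). The Khovanskii--Teissier inequality in higher dimension for nef divisors can be reduced to the two-dimensional case by repeatedly cutting with general very ample hyperplanes, where it is equivalent to the Hodge index theorem for surfaces, which holds in any characteristic. Thus the argument goes through for $X$ of arbitrary dimension over a field of any characteristic.
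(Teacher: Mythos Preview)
Your proposal is correct and follows precisely the route that the paper's cited references take: the paper itself gives no argument but simply refers to \cite[Theorem 2.2]{DiCerboFanelli}, \cite[Theorem 11.4.9]{Laz2}, and \cite{TakagiFujita}, which together amount to exactly the Fujita-approximation-plus-Khovanskii--Teissier strategy you outline. Your sketch is thus a faithful unpacking of what the paper leaves as a citation, including the correct observation that Takagi's work supplies Fujita approximation in positive characteristic and that the Khovanskii--Teissier inequality reduces to the Hodge index theorem.
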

\noindent Recall that 
\[
\vol(D) \defeq \limsup_{m \to \infty} \frac{H^0(X,mD)}{m^n / n!}.
\]
\begin{proof}
See \cite[Theorem 2.2]{DiCerboFanelli} (cf.\ \cite[Theorem 11.4.9]{Laz2} and \cite{TakagiFujita}). 
\end{proof}
\subsection{Frobenius singularities}
All the rings in this section are assumed to be geometric and of positive characteristic, that is finitely generated over an algebraically closed field of characteristic $p>0$.

One of the most amazing discoveries of singularity theory is that properties of the Frobenius map may reflect how singular a variety is. This observation is based on the fact that for a smooth local ring $R$, the $e$-times iterated Frobenius map $R \to F^e_* R$ splits. Further, the splitting does not need to hold when $R$ is singular. 

This leads to a definition of \emph{$F$-split rings}, rings $R$ such that for divisible enough $e \gg 0$ the $e$-times iterated Frobenius map $F^e \colon R \to F^e_* R$ splits. For log pairs, we have the following definition.
\begin{definition} We say that a log pair $(X, \Delta)$ is \emph{$F$-pure}  if for any close point $x \in X$ and any natural number $e > 0 $ there exists a map \[ \phi \in \mathrm{Hom}_{\mcO_{X,x}}(F^e_*\mcO_{X,x}(\lfloor(p^e-1)\Delta \rfloor), \mcO_{X,x}) \] such that $1 \in \phi(F^e_*\mcO_{X,x})$, where $\mcO_{X,x}$ it the stalk at $x \in X$.
\end{definition}

As a consequence of Grothendieck duality (see \cite[Lemma 2.9]{HaconXu}), we have
\[
\Hom(F^e_* \mcO_X, \mcO_X) \simeq H^0(X, \omega_X^{1-p^e}).
\]
This explains the following crucial proposition.
\begin{proposition}[{\cite[Theorem 3.11, 3.13]{SchwedeFAdjunction}}] \label{proposition:bijection_frobenius_divisors} Let $X$ be a normal variety. Then, there is a natural bijection.
\[
\left\{ \begin{array}{c} \text{Non-zero }\mcO_X\text{-linear maps} \\ \phi \colon F^e_* \mcO_X \to \mcO_X \text{ up to} \\ \text{pre-multiplication by units} \end{array} \right\} \longleftrightarrow \left\{ \begin{array}{c} \text{Effective }\mbQ\text{-divisors }\Delta \text{ such that } \\ (1-p^e)\Delta \sim -(1-p^e)K_X \end{array} \right\}
\]
\end{proposition}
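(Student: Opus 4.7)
The plan is to chain the Grothendieck-duality identification $\Hom(F^e_*\mcO_X, \mcO_X) \cong H^0(X, \omega_X^{1-p^e})$, which is already recalled in the excerpt, with the classical correspondence on a normal variety between nonzero sections of a rank-one reflexive sheaf and effective Weil divisors in its linear equivalence class.

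For the forward direction, I would start with a nonzero map $\phi : F^e_*\mcO_X \to \mcO_X$ and pass through Grothendieck duality to a nonzero global section $s_\phi \in H^0(X, \omega_X^{1-p^e})$. Since $X$ is normal, $\omega_X^{1-p^e}$ is the reflexive sheaf $\mcO_X((1-p^e)K_X)$, and the divisor of zeros of $s_\phi$ (computed on the smooth locus, whose complement has codimension at least two, and extended to a Weil divisor by normality) gives an effective Weil divisor $D_\phi$ with $D_\phi \sim (1-p^e)K_X$. Setting $\Delta_\phi \defeq \tfrac{1}{p^e-1}D_\phi$ produces an effective $\mbQ$-divisor satisfying the required relation $(1-p^e)\Delta_\phi \sim -(1-p^e)K_X$. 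Pre-multiplying $\phi$ by a unit $u \in H^0(X,\mcO_X^{\times})$ rescales $s_\phi$ by $u$, which leaves the divisor $D_\phi$ unchanged, so the construction descends to equivalence classes.

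For the inverse direction, starting from an effective $\mbQ$-divisor $\Delta$ with $(1-p^e)\Delta \sim -(1-p^e)K_X$, the Weil divisor $(p^e-1)\Delta$ is effective and lies in the class $-(p^e-1)K_X$, so it is the divisor of zeros of some nonzero section of $\omega_X^{1-p^e}$, unique up to multiplication by a global unit. Applying the inverse of the Grothendieck-duality isomorphism yields a nonzero map $\phi$, well-defined up to pre-multiplication by a unit. By construction the two assignments are mutually inverse, and naturality is immediate since both sides were built from the same section.

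The only subtle point, which is the main bookkeeping obstacle rather than a conceptual one, is that $X$ is only assumed normal: $\omega_X$ is a rank-one reflexive sheaf rather than a line bundle in general, and $K_X$ is a Weil (not Cartier) divisor. The section-to-divisor correspondence still goes through because a reflexive rank-one sheaf on a normal variety is determined by its restriction to the smooth locus, where the classical correspondence applies; one then extends the divisor of zeros across the singular codimension-two locus using that Weil divisors are sums of codimension-one prime divisors.
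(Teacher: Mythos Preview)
The paper does not supply its own proof of this proposition; it is stated with a citation to \cite[Theorem 3.11, 3.13]{SchwedeFAdjunction} and used as a black box. Your argument is the standard one underlying Schwede's result: combine the Grothendieck-duality identification $\mcH om_{\mcO_X}(F^e_*\mcO_X,\mcO_X)\simeq F^e_*\mcO_X((1-p^e)K_X)$ with the correspondence between nonzero sections of a rank-one reflexive sheaf on a normal variety and effective Weil divisors in its class, modulo global units. This is correct and matches the approach in the cited reference.

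One small point worth tightening: the phrase ``pre-multiplication by units'' in the statement refers to the $F^e_*\mcO_X$-module structure on $\Hom_{\mcO_X}(F^e_*\mcO_X,\mcO_X)$, i.e.\ $\phi\mapsto \phi(u\cdot -)$ for $u\in H^0(X,\mcO_X)^\times$; under the duality isomorphism this corresponds exactly to scaling the section of $\mcO_X((1-p^e)K_X)$ by $u$, which is what you use. You might make explicit that the duality isomorphism is one of $F^e_*\mcO_X$-modules, so that this action transports correctly, but otherwise the argument is complete.
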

The $\mbQ$-divisor corresponding to a splitting $\phi \colon F^e_* \mcO_X \to\mcO_X$ will be denoted by $\Delta_{\phi}$. The morphism extends to $\phi \colon F^e_* \mcO_X((p^e-1)\Delta_{\phi}) \to \mcO_X$, which gives that $(X,\Delta_{\phi})$ is $F$-pure.

Note that we will apply the above proposition mainly for $X$ replaced by $\Spec \mcO_{X, x}$, where $x \in X$.
\\ 

Frobenius singularities are alleged to be the correct counterparts of birational singularities in positive characteristic. This supposition is propped up by the following theorems.
\begin{theorem}[{see \cite{HaraWatanabe}}] \label{theorem:fsplit_and_lc} Let $(X,B)$ be a log pair. If $(X,B)$ is $F$-pure, then $(X, B)$ is log canonical.
\end{theorem}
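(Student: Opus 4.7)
The plan is to combine the correspondence of Proposition~\ref{proposition:bijection_frobenius_divisors} with an analysis on a log resolution, ultimately reducing the log canonicity bound to a splitting computation in a DVR.

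First, since both $F$-purity and log canonicity are local properties, I would fix a closed point $x \in X$ and pass to $\Spec \mcO_{X,x}$. By $F$-purity, for each $e > 0$ there is a map $\phi_e \in \Hom(F^e_*\mcO_X(\lfloor(p^e-1)B\rfloor),\mcO_X)$ with $1 \in \phi_e(F^e_*\mcO_X)$. Proposition~\ref{proposition:bijection_frobenius_divisors} attaches to $\phi_e$ an effective $\mbQ$-divisor $\Delta_e$ with $(p^e-1)(K_X+\Delta_e) \sim 0$; maximality of the extension in the bijection forces $(p^e-1)\Delta_e \geq \lfloor(p^e-1)B\rfloor$, so the coefficients of $\Delta_e$ dominate those of $B$ up to an error of order $1/(p^e-1)$. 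If $(X,\Delta_e)$ is log canonical for each $e$, then letting $e\to\infty$ gives $(X,B)$ log canonical. So I reduce to the case $(p^e-1)(K_X + B) \sim 0$.

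Second, I would transport the splitting to a log resolution $\pi\colon Y \to X$ of $(X,B)$. Because $X$ is normal, $Y$ smooth, and Frobenius commutes with $\pi_*$, the identity $\pi_*F^e_*\mcO_Y = F^e_*\mcO_X$ together with the local freeness of $F^e_*\mcO_Y$ on $Y$ allow one to lift $\phi$ to a unique non-zero map $\phi_Y\colon F^e_*\mcO_Y\to\mcO_Y$ with $\pi_*\phi_Y=\phi$. Writing $K_Y + B_Y = \pi^*(K_X + B)$, the torsion relation pulls back to $(p^e-1)(K_Y + B_Y) \sim 0$, and Proposition~\ref{proposition:bijection_frobenius_divisors} applied to $\phi_Y$ produces an effective divisor $\Delta_{\phi_Y}$ with $(p^e-1)(K_Y+\Delta_{\phi_Y})\sim 0$. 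Since $\phi$ and $\phi_Y$ agree on the function field, $\Delta_{\phi_Y}$ and $B_Y$ coincide at every prime divisor outside the exceptional locus; a parallel codimension-one argument at exceptional divisors along which $B_Y$ remains effective forces $\mult_E(\Delta_{\phi_Y})=\mult_E(B_Y)$ whenever $\mult_E(B_Y)>0$. The same pullback argument transfers the extension datum, giving $\phi_Y\colon F^e_*\mcO_Y(\lfloor(p^e-1)\Delta_{\phi_Y}\rfloor)\to\mcO_Y$ with $1 \in \phi_Y(F^e_*\mcO_Y)$.

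Third, I would localize to the DVR $R = \mcO_{Y,\eta_E}$ at a prime divisor $E \subset Y$ with $b := \mult_E(B_Y) > 0$. Writing $t$ for a uniformizer, the localized map $\phi_R\colon F^e_*R(\lfloor(p^e-1)b\rfloor \cdot (t)) \to R$ still satisfies $1 \in \phi_R(F^e_*R)$. Using the free decomposition $F^e_*R = \bigoplus_{i=0}^{p^e-1} R \cdot t^i$, a short direct computation shows that such a map can have $1$ in the image of its restriction to $F^e_*R$ only when $\lfloor(p^e-1)b\rfloor \leq p^e - 1$, i.e., $b \leq 1 + \tfrac{1}{p^e-1}$. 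Letting $e \to \infty$ yields $b \leq 1$, hence $\mult_E(B_Y)\leq 1$, and log canonicity of $(X,B)$ follows since $E$ was arbitrary. The principal obstacle is the second step: transferring the extension datum (and not merely the bare splitting) from $X$ to $Y$ and pinning down $\Delta_{\phi_Y}$ in terms of $B_Y$ along the exceptional locus. This compatibility is a version of inversion of adjunction for $F$-singularities and is the technical heart of Hara and Watanabe's original proof in \cite{HaraWatanabe}.
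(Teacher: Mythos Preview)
The paper does not prove this statement; it is simply quoted from \cite{HaraWatanabe}. Your outline is essentially the Hara--Watanabe argument, and Steps~1 and~3 are fine. The real gap is in Step~2.

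You assert that $\phi$ lifts to a map $\phi_Y\colon F^e_*\mcO_Y\to\mcO_Y$ with $\pi_*\phi_Y=\phi$. The facts you invoke (commutation of Frobenius with $\pi_*$, local freeness of $F^e_*\mcO_Y$) only show that $\pi_*$ defines a map $\Hom_{\mcO_Y}(F^e_*\mcO_Y,\mcO_Y)\to\Hom_{\mcO_X}(F^e_*\mcO_X,\mcO_X)$; they say nothing about surjectivity, and indeed this map is usually \emph{not} surjective. By your own function-field reasoning, the divisor attached to a global lift would have to equal $B_Y$, but $B_Y$ typically has negative coefficients along exceptional divisors, contradicting the effectivity in Proposition~\ref{proposition:bijection_frobenius_divisors}. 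Your clause ``whenever $\mult_E(B_Y)>0$'' is a symptom of this: you are implicitly restricting to the locus where the lift makes sense without explaining why that restriction is harmless.

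The fix is to drop the global lift and argue one valuation at a time. For a prime divisor $E$ on $Y$, extend $\phi$ to the fraction field and restrict to the DVR $R=\mcO_{Y,\eta_E}$. Comparing trace generators via $K_Y+B_Y=\pi^*(K_X+B)$ shows $\phi_R=\Tr_R(c\cdot-)$ with $v_E(c)=(p^e-1)\mult_E(B_Y)$. If $\mult_E(B_Y)\leq 0$ there is nothing to prove; if $\mult_E(B_Y)>0$ then $c\in R$, so $\phi_R$ genuinely lands in $R$, and since $\mcO_{X,x}\subset R$ the same element witnessing $\phi(s)=1$ shows $\phi_R$ is surjective. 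Now your Step~3 gives $\mult_E(B_Y)\leq 1$ immediately for this fixed $e$ (no limit needed: $(p^e-1)B_Y$ is already integral because $(p^e-1)(K_Y+B_Y)\sim 0$ and $Y$ is smooth). This valuation-wise reduction is exactly the ``technical heart'' you flag at the end; it replaces the unjustified global lift.
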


\begin{theorem}[{\cite{Hara2dimSing}}] \label{theorem:2dimp>5} Let $X$ be a Kawamata log terminal projective surface defined over an algebraically closed field of characteristic $p>5$. Then $X$ is $F$-pure.
\end{theorem}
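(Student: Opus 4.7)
The claim is local, so I would work at a closed singular point $x \in X$ and aim to show that the completed local ring $\hat R = \widehat{\mcO}_{X,x}$ is $F$-pure. My plan has two main components: classify the possible germs of two-dimensional KLT singularities, and then verify $F$-purity in each resulting case.

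First, I would invoke the classification of two-dimensional KLT singularities. Their minimal resolutions have weighted dual graphs drawn from a short explicit list: chains of rational curves giving the toric cyclic quotients $\tfrac{1}{r}(1,a)$, the Du Val graphs $A_n, D_n, E_6, E_7, E_8$, and the ``dihedral/tetrahedral/octahedral/icosahedral'' graphs obtained by attaching cyclic tails to Du Val configurations. Using the rationality of KLT surface singularities (quoted in the preliminaries), together with Artin-type results on simultaneous resolution that persist in positive characteristic, each germ is identified with a quotient $\hat R \cong k[[u,v]]^G$ for a finite subgroup $G \subset \GL_2(k)$ acting freely off the origin and without pseudo-reflections.

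Second, I would verify $F$-purity case by case. Toric (cyclic) singularities are $F$-regular, hence $F$-pure, in every characteristic; this is classical and handles the entire cyclic part of the list. For the remaining Du Val and binary polyhedral cases, $G$ is a binary dihedral group of order $4(n-2)$, or binary tetrahedral, octahedral, or icosahedral (of orders $24$, $48$, $120$), so every prime dividing $|G|$ belongs to $\{2,3,5\}$. The assumption $p>5$ then forces $\gcd(|G|,p)=1$, and the Reynolds operator $\rho(s) = |G|^{-1}\sum_{g\in G} g\cdot s$ is a well-defined $R$-linear retraction of $S:=k[[u,v]]$ onto $R = S^G$. Composing with any splitting $\psi$ of $F^e\colon S \to F^e_*S$ (which exists because $S$ is regular) yields a splitting of $F^e \colon R \to F^e_* R$, namely $\rho\circ\psi|_{F^e_*R}$. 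This shows $R$ is $F$-pure; in fact, the same argument upgrades to strong $F$-regularity.

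The principal obstacle is the classification step: one must be certain that in characteristic $p>5$ the KLT germ really is an honest quotient by a finite linearly reductive group, rather than an equicharacteristic deformation of such a quotient carrying wild inertia. The threshold $p>5$ is sharp for this: it is precisely the bound above which $p$ avoids every prime that can divide $|G|$ for the groups on the above list, and indeed $F$-purity genuinely fails for a Du Val singularity of type $E_8$ in characteristics $2$, $3$, or $5$, in line with $|E_8|=120$. Once the tameness input is secured, the Reynolds averaging and the Frobenius splitting on the regular ambient ring combine into a routine argument.
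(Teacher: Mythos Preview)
The paper does not prove this theorem; it simply cites Hara's classification result \cite{Hara2dimSing} and uses it as a black box. So there is no in-text proof to compare your proposal against.

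Your outline is nonetheless in the right spirit, and the second half is clean: once one knows that $\widehat{\mcO}_{X,x}\cong k[[u,v]]^G$ with $p\nmid |G|$, the Reynolds retraction makes $R$ a direct summand of the regular ring $S=k[[u,v]]$, and direct summands of strongly $F$-regular rings are strongly $F$-regular, hence $F$-pure. That part needs no further work.

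The weight is entirely on your first step, and here your justification is not quite the right tool. ``Artin-type results on simultaneous resolution'' concern deformations of a given singularity, not the statement you need, which is \emph{tautness}: that a rational surface singularity with one of the listed dual graphs is determined, up to isomorphism of the complete local ring, by that graph. In characteristic zero this is Laufer--Brieskorn; in positive characteristic one needs the analogous results (Artin, Lipman, and later refinements), and specifically that the ADE and ``polyhedral'' graphs remain taut once $p$ avoids the relevant small primes. Without this input you have only matched the resolution graph, not the analytic type, and the identification $\widehat{\mcO}_{X,x}\cong k[[u,v]]^G$ is unjustified. Hara's own argument sidesteps this by working directly with the explicit hypersurface equations of rational double points and applying Fedder's criterion, then handling the remaining log terminal cases via cyclic covers; that route is more computational but avoids appealing to tautness in positive characteristic. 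If you want to keep your quotient-singularity approach, you should replace the simultaneous-resolution reference with a precise tautness statement valid for $p>5$, or cite a source that establishes the quotient description directly.
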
 
The above theorem implies, even more, that $X$ is strongly $F$-regular. In this paper, however, we do not need the notion of $F$-regularity.\\

A key tool in the theory of Frobenius splittings is a trace map (see \cite{TanakaTrace} and \cite{SchwedeAdjoint}). For an integral divisor $D$ on a normal variety $X$, there is an isomorphism derived from the Grothendieck duality (see \cite[Lemma 2.9]{HaconXu}):
\begin{equation} \label{equation:Grothendieck}
\mcH om_{\mcO_X}(F^e_*\mcO_X(D), \mcO_X) \simeq \mcO_X(-(p^e-1)K_X-D).
\end{equation}

\begin{definition} Let $B$ be a $\mbQ$-divisor such that $(p^e-1)(K_X+B)$ is Cartier. We call
\[
\Tr^e_{X,B} \colon F^e_*\mcO_X(-(p^e-1)(K_X + B)) \rightarrow \mcO_X,
\]
the \emph{trace map}. It is constructed by applying the above isomorphism (\ref{equation:Grothendieck}) to the map 
\begin{equation} \label{equation:trace_evaluation}
\mcH om_{\mcO_X}(F^e_*\mcO_X((p^e-1)B), \mcO_X) \xrightarrow{\mathrm{ev}} \mcH om_{\mcO_X}(\mcO_X, \mcO_X)  
\end{equation}
being the dual of the composition $\mcO_X \xrightarrow{F^e} F^e_* \mcO_X \xhookrightarrow{\hphantom{F^e}} F^e_* \mcO_X((p^e-1)B)$.
\end{definition}

The rank one sheaves in question are not necessary line bundles, but since $X$ is normal, we can always restrict ourselves to the smooth locus.  The trace map can be also defined when the index of $K_X+B$ is divisible by $p$, but we will not need it in this paper.

 The following proposition reveals the significance of the trace map.
\begin{proposition}[{\cite[Proposition 2.10]{HaconXu}}] \label{proposition:trace_and_fsplit} Let $(X,B)$ be a normal log pair such that the Cartier index of $K_X+B$ is not divisible by $p$. Then $(X,B)$ is $F$-pure at a point $x \in X$ if and only if the trace map $\Tr^e_{X,B}$ is surjective at $x$ for all enough divisible $e \gg 0$.
\end{proposition}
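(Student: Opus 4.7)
The plan is to reduce both sides of the equivalence to a single concrete condition: the existence of an $\mcO_{X,x}$-linear map $\phi\colon F^e_*\mcO_{X,x}((p^e-1)B) \to \mcO_{X,x}$ with $\phi(1) \in \mcO_{X,x}^\times$. The bridge between the two formulations is the Grothendieck duality identification \eqref{equation:Grothendieck}.

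First I would fix $e$ sufficiently divisible that $(p^e-1)(K_X+B)$, and hence $(p^e-1)B$, is Cartier; such $e$ exist because the Cartier index of $K_X+B$ is coprime to $p$, so one can arrange $p^e \equiv 1$ modulo this index. In particular $(p^e-1)B$ is integral and equals $\lfloor(p^e-1)B\rfloor$, so the sheaf $F^e_*\mcO_X((p^e-1)B)$ appearing in the definition of $F$-purity is exactly the source sheaf whose dual, via \eqref{equation:Grothendieck}, is $F^e_*\mcO_X(-(p^e-1)(K_X+B))$.

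Next, unpacking the construction: by \eqref{equation:trace_evaluation}, $\Tr^e_{X,B}$ is exactly the map $\phi \mapsto \phi(1)$ once local sections of $F^e_*\mcO_X(-(p^e-1)(K_X+B))$ are identified, via Grothendieck duality, with $\mcO_X$-linear morphisms $F^e_*\mcO_X((p^e-1)B) \to \mcO_X$. Thus surjectivity of $\Tr^e_{X,B}$ at $x$ says that some such $\phi$ has $\phi(1)$ a unit in $\mcO_{X,x}$; rescaling by $\phi(1)^{-1}$ one may take $\phi(1)=1$, which immediately gives $1 \in \phi(F^e_*\mcO_{X,x})$, i.e.\ $F$-purity at $x$.

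For the converse direction, starting from $F$-purity one has some $\phi$ and some $s \in F^e_*\mcO_{X,x}$ with $\phi(s)=1$. I would then define $\phi'(a) \defeq \phi(s \cdot a)$, where the product uses the $F^e_*\mcO_X$-module structure on $F^e_*\mcO_X((p^e-1)B)$. Since multiplication by $s$ is $F^e_*\mcO_X$-linear, hence a fortiori $\mcO_X$-linear, $\phi'$ is an $\mcO_X$-linear map with $\phi'(1)=1$, proving surjectivity of $\Tr^e_{X,B}$ at $x$. The one subtle step, and the main technical obstacle, is verifying that the abstract Grothendieck duality isomorphism \eqref{equation:Grothendieck} really does intertwine the evaluation-at-$1$ map on $\Hom$ with the concrete map $\Tr^e_{X,B}$ coming from \eqref{equation:trace_evaluation}; this is essentially tautological from the construction of the trace, but it is where one must be careful since it hides the duality for the Frobenius morphism.
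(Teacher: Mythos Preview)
Your proof is correct and follows essentially the same route as the paper's: both identify $\Tr^e_{X,B}$, via the Grothendieck duality isomorphism \eqref{equation:Grothendieck}, with the evaluation map $\phi \mapsto \phi \circ F^e$ (equivalently $\phi \mapsto \phi(1)$ under $\Hom(\mcO_X,\mcO_X)\simeq \mcO_X$), and then observe that surjectivity of this map at $x$ is exactly the existence of a splitting. Your version is slightly more explicit in the converse direction, spelling out the premultiplication trick $\phi'(-)=\phi(s\cdot -)$ to pass from $1\in\mathrm{im}(\phi)$ to $\phi'(1)=1$, whereas the paper simply asserts that surjectivity of $\mathrm{ev}$ is equivalent to $\phi$ being a splitting.
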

It is easy to see that $\Tr^e_{X,B}$ is surjective at $x$ for all divisible enough $e \gg 0$ if and only if it is surjective for just one $e>0$ satisfying that $(p^e-1)B$ is Weil. For the convenience of the reader we give a proof of the proposition.
\begin{proof}
The key point is that $\Tr^e_{X,B}$ is induced by the evaluation map (\ref{equation:trace_evaluation}). Replace $X$ by $\Spec \mcO_{X,x}$. For $\phi \in \mathrm{Hom}_{\mcO_X}(F^e_*\mcO_X((p^e-1)B), \mcO_X)$, the image $\mathrm{ev}(\phi)$ is defined by the commutativity of the following diagram:
\begin{center}
\begin{tikzcd}
\mcO_X \arrow{r}{F^e} \arrow[bend right = 25]{rr}{\mathrm{ev}(\phi)} & F^e_* \mcO_X((p^e-1)B) \arrow{r}{\phi} & \mcO_X.
\end{tikzcd}
\end{center}
Note that $\mathrm{Hom}(\mcO_X,\mcO_X) \simeq \mcO_X$ is generated by the identity morphism $\mathrm{id}$. In particular, $\mathrm{ev}$ is surjective if and only if there exists $\phi$ such that $\mathrm{ev}(\phi)=\mathrm{id}$, which is equivalent to $\phi$ being a splitting.
\end{proof}

Further, we consider another version of the trace map. Let $D$ be a $\mbQ$-divisor such that $K_X + B + D$ is Cartier. Tensoring the trace map $\Tr^e_{X,B}$ by it, we obtain:
\[
\Tr^e_{X,B}(D) \colon F^e_*\mcO_X(K_X + B + p^eD) \longrightarrow \mcO_X(K_X + B + D).
\]
By abuse of notation, both versions of the trace map are denoted in the same way.

Later, we will need the following lemma.

\begin{lemma} \label{lemma:index_divisible_by_p} Let $X$ be a $\mbQ$-factorial variety, which is $F$-pure at a point $x \in X$. Then, there exists an effective $\mbQ$-divisor $B$ such that \[ (p^e-1)(K_X + B) \] is Cartier for enough divisible $e \gg 0$, and $(X,B)$ is $F$-pure at $x$. One can take the coefficients of $B$ to be as small as possible.
\end{lemma}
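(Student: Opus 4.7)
The plan is to apply Proposition \ref{proposition:bijection_frobenius_divisors} to the local ring $\mcO_{X,x}$ and then propagate the resulting divisor to a global one using the $\mbQ$-factoriality of $X$. Since both $F$-purity at $x$ and the Cartier property of $(p^e-1)(K_X+B)$ at $x$ are properties of the local ring, I would first replace $X$ by $\Spec \mcO_{X,x}$ and carry out the construction there.

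For a sufficiently divisible $e$, the assumption that $X$ is $F$-pure at $x$ produces an $\mcO_{X,x}$-linear map $\phi \colon F^e_* \mcO_{X,x} \to \mcO_{X,x}$ with $1 \in \phi(F^e_* \mcO_{X,x})$; rescaling by a unit in the image, one may take $\phi(1) = 1$. By Proposition \ref{proposition:bijection_frobenius_divisors}, this $\phi$ corresponds to an effective $\mbQ$-divisor $\Delta_\phi$ on $\Spec \mcO_{X,x}$ satisfying $(1-p^e)\Delta_\phi \sim -(1-p^e)K_X$. In particular, $(p^e-1)(K_X + \Delta_\phi) \sim 0$ locally at $x$, hence is Cartier in a neighborhood of $x$, and the remark following that proposition gives that $(\Spec \mcO_{X,x}, \Delta_\phi)$ is $F$-pure.

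To globalize, I would define $B$ to be the $\mbQ$-divisor on $X$ obtained by replacing each prime component of $\Delta_\phi$ by its Zariski closure in $X$, keeping the same coefficient. By construction $B$ agrees with $\Delta_\phi$ in a neighborhood of $x$, so $(X,B)$ is $F$-pure at $x$ and $(p^e-1)(K_X+B)$ is Cartier at $x$. Since $X$ is $\mbQ$-factorial, $K_X + B$ is $\mbQ$-Cartier, and after replacing $e$ by an appropriate multiple (to absorb the global index of $K_X+B$), the divisor $(p^e-1)(K_X+B)$ becomes globally Cartier for every further divisible $e \gg 0$.

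For the statement that the coefficients of $B$ can be chosen as small as possible, I would invoke the freedom in the choice of $\phi$: the bijection in Proposition \ref{proposition:bijection_frobenius_divisors} puts all $\phi$ with $1 \in \phi(F^e_* \mcO_{X,x})$ in correspondence with effective $\mbQ$-divisors of the prescribed type, so among all admissible $\phi$ one simply selects one whose associated $\Delta_\phi$ has the smallest coefficients. The main subtlety is reconciling the purely local $F$-purity hypothesis at $x$ with the global structure required for $(p^e-1)(K_X+B)$ to be Cartier; this is exactly where $\mbQ$-factoriality enters, turning a locally principal divisor near $x$ into a globally Cartier one after multiplying $e$.
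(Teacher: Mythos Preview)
Your argument has a genuine gap at the globalization step. You correctly obtain a divisor $B$ (the closure of $\Delta_\phi$) such that $(p^e-1)(K_X+B)$ is Cartier \emph{at $x$}, and then appeal to $\mbQ$-factoriality to conclude that $K_X+B$ is globally $\mbQ$-Cartier. However, you then assert that ``after replacing $e$ by an appropriate multiple'' the divisor $(p^e-1)(K_X+B)$ becomes globally Cartier. This only works if the global Cartier index $m$ of $K_X+B$ is coprime to $p$: since $p^e-1 \equiv -1 \pmod p$ for every $e$, no choice of $e$ will ever give $m \mid (p^e-1)$ when $p \mid m$. Nothing in your construction controls the index of $K_X+B$ at points $y \neq x$, and indeed the whole purpose of the lemma (as remarked just after its statement) is to handle the case where the Gorenstein index of $X$ is divisible by $p$, so one should expect bad behaviour away from $x$.

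The paper's proof addresses exactly this. Starting from the same $\Delta = \Delta_\phi$, it observes that $p^e(K_X+\Delta)$ has global Cartier index coprime to $p$ once $p^e$ absorbs the $p$-part of the index of $K_X+\Delta$. It then passes to an effective combination $\Delta + D + E$, supported at $x$ only along $\Delta$, with $K_X + (\Delta+D+E) \sim p^e(K_X+\Delta) + E$ of index coprime to $p$; finally it sets $B = \frac{1}{p^n+1}(\Delta+D+E)$, which keeps the index coprime to $p$ and, as $n \to \infty$, makes the coefficients of $B$ arbitrarily small. This last point also clarifies what ``coefficients as small as possible'' means: it is not a matter of selecting an optimal $\phi$ among finitely many candidates, but of producing a sequence of valid $B$'s with coefficients tending to zero --- something your selection argument does not provide.
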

More precisely, if we can find $B$ as above, then there exists a sequence $\lim_{j \to \infty} a_j = 0$ such that $a_j B$ satisfies the conditions of this lemma.

Note that, if the Gorenstein index of $X$ is not divisible by $p$, then one can just take $B=0$.
\begin{proof}
By Proposition \ref{proposition:bijection_frobenius_divisors}, we know that there exists an effective $\mbQ$-divisor $\Delta$ such that $(X,\Delta)$ is $F$-pure at $x \in X$, and the Cartier index of $K_X + \Delta$ at $x$ is not divisible by $p$. 

In particular, for enough divisible $e \gg 0$, there exists a $\mbQ$-divisor $D \sim (p^e-1)(K_X+\Delta)$ such that $x \not \in \Supp D$. Further, we can find a Cartier divisor $E$ disjoint with $x$ for which $E + D$ is effective.

Notice that $K_X + \Delta + D + E \sim p^e(K_X+\Delta) + E$ has Cartier index indivisible by $p$ for $e \gg 0$. Take
\[
B = \frac{1}{p^n+1}(\Delta + D + E)
\] 
for $n \gg 0$. Then
\[
K_X + B \sim K_X + \Delta + D + E - \frac{p^n}{p^n+1}(\Delta + D + E),
\]   
where both $K_X + \Delta + D + E$ and $p^n(\Delta + D + E)$ have Cartier indices not divisible by $p$.
\end{proof}

\subsection{Reider's analysis}
Reider's analysis is a method of showing that divisors of the form $K_X + L$ are globally generated or very ample, where $L$ is a big and nef divisor on a smooth surface $X$. 

The idea is that a base point of $K_X + L$ provides us with a rank two vector bundle $\mcE$ which does not satisfy Bogomolov inequality $c_1(\mcE)^2 \leq 4c_2(\mcE)$. In characteristic zero such vector bundles are unstable. Using the instability, one can deduce a contradiction to the existence of a base point, when $L$ is ``numerically-ample enough''.

In positive characteristic, the aforementioned fact about unstable vector bundles on smooth surfaces is not true in general. However, Shepherd-Barron proved it for surfaces which are neither of general type nor quasi-elliptic of Kodaira dimension one (\cite{Shepherd-Barron}). This leads to the following.   

\begin{proposition}[{\cite[Theorem 2.4]{Terakawa}}] \label{proposition:terakawa} Let $X$ be a smooth projective surface neither of general type nor quasi-elliptic with $\kappa(X)=1$, and let $D$ be a nef divisor such that $D^2 > 4$. Assume that $q \in X$ is a base-point of $K_X + D$. Then, there exists an integral curve $C$ containing $q$, such that $D \cdot C \leq 1$.
\end{proposition}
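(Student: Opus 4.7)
The plan is to follow Reider's original argument (\cite{Reider}), with Shepherd-Barron's positive-characteristic instability theorem (\cite[Theorem 7]{Shepherd-Barron}) substituted for the characteristic-zero Bogomolov instability statement. Assume for contradiction that every integral curve $C$ through $q$ satisfies $D \cdot C \geq 2$. Since $q$ is a base point of $K_X + D$, the evaluation map $H^0(X, K_X+D) \to (K_X+D)\otimes k(q)$ fails to be surjective, so $H^1(X, \mcI_q(K_X+D)) \neq 0$. Serre duality converts this into a non-zero class in $\mathrm{Ext}^1(\mcI_q(D), \mcO_X)$, which yields a non-split extension
\[
0 \longrightarrow \mcO_X \longrightarrow \mcE \longrightarrow \mcI_q(D) \longrightarrow 0.
\]
Since the colength-one subscheme $\{q\}$ trivially satisfies the Cayley--Bacharach condition, $\mcE$ is locally free, with $c_1(\mcE) = D$ and $c_2(\mcE) = 1$. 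The hypothesis $D^2 > 4$ then gives $c_1(\mcE)^2 - 4 c_2(\mcE) > 0$, so $\mcE$ violates the Bogomolov inequality.

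Next, I would invoke Shepherd-Barron's theorem: on a smooth surface that is neither of general type nor quasi-elliptic of Kodaira dimension one, any rank-two vector bundle violating the Bogomolov inequality is Bogomolov unstable. This produces a maximal destabilising sub-line bundle $\mcO_X(M) \hookrightarrow \mcE$ fitting into
\[
0 \longrightarrow \mcO_X(M) \longrightarrow \mcE \longrightarrow \mcO_X(D-M) \otimes \mcI_Z \longrightarrow 0,
\]
for some zero-dimensional $Z$, with $(2M - D)^2 > 0$ and $(2M - D) \cdot H > 0$ for every ample $H$, together with the Chern-class relation $M \cdot (D - M) + \mathrm{length}(Z) = 1$. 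I would then check that the composition $\mcO_X(M) \hookrightarrow \mcE \twoheadrightarrow \mcI_q(D)$ is non-zero, using that otherwise $\mcO_X(M)$ would factor through $\mcO_X$, forcing $-M$ effective, which is incompatible with $D$ nef and $2M-D$ lying in the positive cone. This non-zero composition produces an effective divisor $N \sim D - M$ passing through $q$.

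Finally, picking an integral component $C$ of $N$ through $q$ and writing $N = C + N'$ with $N' \geq 0$, the nefness of $D$ gives $D \cdot C \leq D \cdot (D - M)$. To sharpen this to $D \cdot C \leq 1$, I would combine the three numerical inputs: $(2M - D)^2 > 0$, $(2M - D)\cdot D \geq 0$ (Hodge index, since $D$ and $2M - D$ both have positive self-intersection), and $D \cdot M - M^2 \leq 1$ from the Chern-class identity, and then apply the Hodge index theorem to the pair $(D, C)$ to rule out $D \cdot C \geq 2$. The main obstacle is precisely this last numerical step: one has to keep track simultaneously of the positivity of $2M-D$, the effectivity of $N$, and the possible coincidence $Z = \{q\}$, in order to extract the sharp bound rather than a weaker inequality of the form $D \cdot C \leq \lfloor D^2/2 \rfloor$. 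Verifying that Shepherd-Barron's hypothesis applies under the Kodaira-dimension restriction, and that his conclusion delivers the destabilising sequence in the form just used, is a technicality I would import directly from \cite{Shepherd-Barron}.
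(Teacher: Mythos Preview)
The paper does not prove this proposition; it is quoted verbatim as \cite[Theorem 2.4]{Terakawa} and used as a black box. Your sketch is therefore not competing with an argument in the paper but reconstructing the cited Reider--Terakawa proof, and the outline you give (Serre duality, Cayley--Bacharach extension, Shepherd-Barron instability, destabilising sub-line-bundle) is the correct one.

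The only place you hesitate, the final numerical step, is in fact clean and does not require the Hodge-index argument applied to $(D,C)$ that you propose. Write $N \sim D-M$ for the effective divisor through $q$ produced by the non-zero composition $\mcO_X(M)\to\mcE\to\mcI_q(D)$. From $c_2(\mcE)=1$ you have $M\cdot N\leq 1$, and $D\cdot N = M\cdot N + N^2$. If $N^2\leq 0$ you are done: $D\cdot N\leq 1$, and any integral component $C$ of $N$ through $q$ satisfies $D\cdot C\leq D\cdot N\leq 1$ by nefness. If $N^2>0$, then $N$ lies in the positive cone, so pairing with $2M-D=M-N$ (also in the positive cone) gives $M\cdot N\geq N^2\geq 1$; hence $M\cdot N=N^2=1$ and $D\cdot N=2$. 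But then Hodge index for $D$ and $N$ forces $4=(D\cdot N)^2\geq D^2\cdot N^2=D^2>4$, a contradiction. So $N^2\leq 0$ always, and the sharp bound $D\cdot C\leq 1$ follows without any case analysis on whether $Z=\{q\}$.
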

In particular, for such surfaces, $K_X + 3A$ is base point free for an ample divisor $A$.

The goal of this subsection is to prove the following proposition, which covers all types of surfaces. 
\begin{proposition} \label{proposition:andrea_general} Let $X$ be a smooth projective surface, and let $D$ be a nef and big divisor on it. Assume that $q \in S$ is a base point of $K_X + D$. Further, suppose that
\begin{enumerate}
	\item $D^2 > 4$, if $X$ is quasi-elliptic with $\kappa(X)=1$,
	\item $D^2 > \vol(K_X) + 4$, if $X$ is of general type and $p \geq 3$, 
	\item $D^2 > \vol(K_X) + 6$, if $X$ is of general type and $p=2$, or
	\item $D^2 > 4$, otherwise.	
\end{enumerate}
Then, there exists a curve $C$ containing $q$ such that
\begin{enumerate}
	\item[(1a)] $D \cdot C \leq 5$, if $X$ is quasi-elliptic with $\kappa(X)=1$ and $p=3$,
	\item[(1b)] $D \cdot C \leq 7$, if $X$ is quasi-elliptic with $\kappa(X)=1$ and $p=2$,
	\item[(2)] $D \cdot C \leq 1$, if $X$ is of general type and $p\geq 3$,	
	\item[(3)] $D \cdot C \leq 7$, if $X$ is of general type and $p=2$, or
	\item[(4)] $D \cdot C \leq 1$, otherwise.	
\end{enumerate}
\end{proposition}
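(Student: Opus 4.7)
My plan is to run a Reider-style analysis based on Bogomolov-type instability of a rank two vector bundle, invoking the refinements of Bogomolov's inequality that are available in each surface class.

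First, following the classical Reider construction, a base point $q$ of $K_X + D$ produces, via Serre duality and a non-vanishing $\mathrm{Ext}^1$ argument, a non-split extension
\[
0 \longrightarrow \mcO_X \longrightarrow \mcE \longrightarrow \mcI_q \otimes \mcO_X(D) \longrightarrow 0,
\]
so $\mcE$ is a rank two vector bundle with $c_1(\mcE) = D$, $c_2(\mcE) = 1$, and discriminant $c_1(\mcE)^2 - 4 c_2(\mcE) = D^2 - 4$. The thresholds on $D^2$ in the four cases are tailored so that, after the characteristic-dependent correction to Bogomolov's inequality, $\mcE$ is forced to be Bogomolov-unstable in the appropriate sense.

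Next, I would apply the relevant instability statement case by case. In the ``otherwise'' case the classical Bogomolov inequality holds by Shepherd-Barron's theorem, and the argument of Proposition \ref{proposition:terakawa} goes through verbatim to produce a curve $C$ with $D \cdot C \leq 1$. For $X$ of general type I would use the positive-characteristic version of Bogomolov that replaces $c_1^2 \leq 4c_2$ by $c_1^2 \leq 4c_2 + \vol(K_X)$ when $p \geq 3$, with the extra additive correction when $p = 2$, exactly as exploited by Di Cerbo--Fanelli. For quasi-elliptic surfaces with $\kappa(X) = 1$ I would use Shepherd-Barron's refined instability for such surfaces in characteristics $2$ and $3$, which yields a weaker destabilization whose magnitude depends on $p$. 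In each case, instability produces a saturated sub-line bundle $M \hookrightarrow \mcE$ for which $2M - D$ lies in the positive cone; composing $M \hookrightarrow \mcE \to \mcI_q \otimes \mcO_X(D)$ then shows that the effective divisor $N \defeq D - M$ passes through $q$ and supports a curve $C$ with $q \in C$.

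Finally, I would apply the Hodge index theorem to the pair $(D, N)$, combining nefness and bigness of $D$ with the numerical strength of the destabilization, to read off the claimed bound on $D \cdot C$. The main obstacle will be keeping careful track of the exact correction terms appearing in characteristic $2$ and in the quasi-elliptic cases: these are precisely what weakens the bound from $D \cdot C \leq 1$ to $D \cdot C \leq 5$ or $D \cdot C \leq 7$, and matching the numerics of the thresholds on $D^2$ to the stated coefficients requires a careful case analysis rather than a single uniform computation.
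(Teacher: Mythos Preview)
Your approach is genuinely different from the paper's, and the difference matters. The paper does \emph{not} work with the rank-two bundle $\mcE$ on $X$. Instead it blows up $q$ to get $\pi\colon Y\to X$ with exceptional curve $F$, sets $\overline{D}=\pi^*D-2F$, deduces $H^1(Y,\mcO_Y(-\overline{D}))\neq 0$ from the base-point hypothesis, and then feeds this non-vanishing into a black box from Di Cerbo--Fanelli (Proposition~\ref{proposition:key_for_andrea}). That black box outputs an effective divisor $\overline{E}$ on $Y$ with $k\overline{D}-2\overline{E}$ big, $(k\overline{D}-\overline{E})\cdot\overline{E}\le 0$, and $D\cdot E\le \tfrac{k\alpha}{2}-1$, where $k\in\{1,3,4\}$ depends on the case. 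Crucially, this $\overline{E}$ is \emph{not} a priori known to meet $F$, so the paper must run a separate argument (Lemma~\ref{lemma:sakai}) to force a component of $\overline{E}$ to intersect $F$ properly; only then does $C=\pi_*(\text{that component})$ pass through $q$.

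Your plan trades this difficulty for a different one. Working with $\mcE$ on $X$, the fact that $N=D-M$ passes through $q$ is indeed automatic from the composition $M\hookrightarrow\mcE\to\mcI_q(D)$, which is a genuine advantage. The gap is that the instability statements you invoke are not available in the form you describe. Di Cerbo--Fanelli do not prove a corrected Bogomolov inequality ``$c_1^2\le 4c_2+\vol(K_X)$'' yielding a saturated destabilising sub-line-bundle; their input is $H^1(-\overline{D})\neq 0$ (not a rank-two bundle), their method is a Frobenius/Ekedahl-type construction, and their output in the quasi-elliptic and $p=2$ general-type cases carries the factor $k>1$, which does not translate back into ``$2M-D$ lies in the positive cone'' for some sub-line-bundle $M\subset\mcE$. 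So as written, your step ``instability produces a saturated $M\hookrightarrow\mcE$'' is unjustified in cases (1), (2), (3); you would either have to prove such a statement from scratch or revert to the blow-up formulation, at which point you face the same ``does the curve pass through $q$?'' issue the paper resolves with Sakai's lemma.
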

Note that the case (4) is nothing else but Proposition \ref{proposition:terakawa}. The proof follows step-by-step the proof by Di Cerbo and Fanelli (\cite{DiCerboFanelli}). The only addition is that the curve $C$ must contain $q$. The idea that this must hold has been established in (\cite{Sakai}) based on (\cite{Serrano}), but, for the convenience of the reader, we present the full proof below.

The following is crucial in the proof of Proposition \ref{proposition:andrea_general}.
\begin{proposition}[{\cite{DiCerboFanelli}}] \label{proposition:key_for_andrea} Consider a birational morphism $\pi \colon Y \to X$ between smooth projective surfaces $X$ and $Y$ which are either of general type, or quasi-elliptic with $\kappa(X)=1$. Let $\overline{D}$ be a big divisor on $Y$ such that $H^1(Y, \mcO_Y(-\overline{D})) \neq 0$ and $\overline{D}^2>0$. Further, suppose that $D \defeq \pi_* \overline{D}$ is nef, and 
\begin{enumerate}
	\item $\overline{D}^2 > \vol(K_X)$, if $X$ is of general type and $p\geq 3$, or
	\item $\overline{D}^2 > \vol(K_X) + 2$, if $X$ is of general type and $p=2$. 
\end{enumerate}

	Then, there exists a non-zero non-exceptional effective divisor $\overline{E}$ on $Y$, such that
	\begin{align*}
		&k\overline{D} - 2\overline{E} \text{ is big,} \\		
		&(k\overline{D} - \overline{E}) \cdot \overline{E} \leq 0, \text{ and } \\
		&0 \leq D \cdot E \leq \frac{k\alpha}{2}-1,
	\end{align*}
where $E \defeq \pi_* \overline{E}$, $\alpha \defeq D^2 - \overline{D}^2$, and 
\begin{itemize}
	\item $k=3$, if $X$ is quasi-elliptic with $\kappa(X)=1$ and $p=3$,
	\item $k=4$, if $X$ is quasi-elliptic with $\kappa(X)=1$ and $p=2$,
	\item $k=1$, if $X$ is of general type and $p\geq 3$, or
	\item $k=1$ or $k=4$, if $X$ is of general type and $p=2$.
\end{itemize}
\end{proposition}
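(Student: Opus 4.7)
The plan is to reduce cases (1)--(3) of the statement to Proposition \ref{proposition:key_for_andrea} by a single blow-up construction, and to handle case (4) by direct appeal to Terakawa's theorem (Proposition \ref{proposition:terakawa}), where the fact that the curve passes through $q$ is already built into the statement. So I focus on the situation where $X$ is either quasi-elliptic with $\kappa(X)=1$ or of general type.

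The setup is: let $\pi \colon Y \to X$ be the blow-up of $q$, with exceptional divisor $F\cong \mbP^1$, $F^2=-1$. Set
\[
\overline{D} \defeq \pi^* D - 2F,
\]
so that $\pi_*\overline{D} = D$ is nef, $\overline{D}^2 = D^2 - 4$, and $\alpha \defeq D^2 - \overline{D}^2 = 4$. Under each of the hypotheses (1)--(3) the resulting numerical inequality on $\overline{D}^2$ is exactly the one required by Proposition \ref{proposition:key_for_andrea} (namely $\overline{D}^2>0$, $\overline{D}^2>\vol(K_X)$, or $\overline{D}^2>\vol(K_X)+2$ respectively), and the bigness of $\overline{D}$ follows from $\vol(\pi^*D - 2F)\geq D^2-4>0$. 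It remains to produce the cohomological input $H^1(Y,\mcO_Y(-\overline{D}))\neq 0$. By Serre duality this is dual to $H^1(Y,K_Y+\overline{D})$, and a direct calculation gives $K_Y + \overline{D} = \pi^*(K_X+D) - F$. Using the short exact sequence
\[
0 \to \mcO_Y(K_Y + \overline{D}) \to \mcO_Y\bigl(\pi^*(K_X+D)\bigr) \to \mcO_F \to 0,
\]
the long cohomology sequence identifies the first map on global sections with the inclusion of sections of $K_X+D$ vanishing at $q$ inside $H^0(X,K_X+D)$. Since $q$ is a base point this inclusion is an equality, forcing the connecting map $k=H^0(F,\mcO_F) \hookrightarrow H^1(Y,K_Y+\overline{D})$ to be injective and hence $H^1(Y,\mcO_Y(-\overline{D}))\neq 0$.

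Now apply Proposition \ref{proposition:key_for_andrea} to obtain a non-exceptional effective divisor $\overline{E}$ on $Y$ with $E\defeq \pi_*\overline{E}\neq 0$ and
\[
0 \leq D \cdot E \leq \frac{k\alpha}{2} - 1 = 2k - 1,
\]
where $k$ is the integer assigned to each case in Proposition \ref{proposition:key_for_andrea}. Substituting $k=3,4,1,4$ reproduces the numerical bounds $5,7,1,7$ in conclusions (1a), (1b), (2), (3) of the present proposition.

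The final and most delicate step is to extract a genuine curve $C$ containing $q$ with $D\cdot C$ bounded by the same number. Writing $\overline{E} = E' + rF$ with $E'$ the strict transform of $E$ and $r\geq 0$, the Sakai--Serrano argument (carried out in \cite{Sakai} based on \cite{Serrano}) observes that if no component of $E$ passed through $q$ then $E' \cdot F = 0$, and combining this with the two numerical constraints $(k\overline{D}-\overline{E})\cdot \overline{E}\leq 0$ and $k\overline{D}-2\overline{E}$ big, expanded using $F^2=-1$ and $\pi^*D\cdot F=0$, yields a contradiction. Once a component $C$ of $E$ is known to pass through $q$, the inequality $D\cdot E \leq 2k-1$ together with nefness of $D$ and effectiveness of $E - C$ automatically forces $D \cdot C \leq 2k-1$, completing the proof. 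The main obstacle is making this last Sakai--Serrano step genuinely uniform across the quasi-elliptic and general-type cases and across all values of $k$; the rest of the argument is bookkeeping of intersection numbers on the single blow-up $Y$.
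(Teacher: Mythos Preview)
You have proved the wrong proposition. The statement you were asked to prove is Proposition~\ref{proposition:key_for_andrea}, which takes as \emph{input} a big divisor $\overline{D}$ on $Y$ with $H^1(Y,\mcO_Y(-\overline{D}))\neq 0$ and produces the effective divisor $\overline{E}$ with the listed numerical constraints. Your argument instead \emph{assumes} Proposition~\ref{proposition:key_for_andrea} as a black box (you write ``Now apply Proposition~\ref{proposition:key_for_andrea} to obtain\ldots''), builds the blow-up at a base point $q$, and then extracts a curve through $q$. That is precisely the proof of Proposition~\ref{proposition:andrea_general}, the proposition immediately following this one in the paper. Your references to ``cases (1)--(4)'' and ``conclusions (1a), (1b), (2), (3)'' match the case structure of Proposition~\ref{proposition:andrea_general}, not of Proposition~\ref{proposition:key_for_andrea}.

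The paper's actual proof of Proposition~\ref{proposition:key_for_andrea} is a direct citation: it is assembled from \cite[Proposition~4.3, Theorem~4.4, Proposition~4.6, Corollary~4.8]{DiCerboFanelli}. The content lies in Di Cerbo--Fanelli's analysis of the failure of Bogomolov instability in positive characteristic, which produces the divisor $\overline{E}$; none of this appears in your write-up. As a side remark, what you wrote is in fact a faithful sketch of the paper's proof of Proposition~\ref{proposition:andrea_general}, including the use of Lemma~\ref{lemma:sakai} in the final contradiction step --- so the material is not wasted, just misplaced.
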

\begin{proof} It follows directly from \cite[Proposition 4.3]{DiCerboFanelli}, \cite[Theorem 4.4]{DiCerboFanelli}, \cite[Proposition 4.6]{DiCerboFanelli} and \cite[Corollary 4.8]{DiCerboFanelli}.
\end{proof}

Further, we need the following lemma.
\begin{lemma}[{\cite[Lemma 2]{Sakai}}]  \label{lemma:sakai} Let $D$ be a nef and big divisor on a smooth surface $S$. If 
\[
D \equiv D_1 + D_2
\]
for numerically non-trivial pseudo-effective divisors $D_1$ and $D_2$, then $D_1 \cdot D_2 > 0$.
\end{lemma}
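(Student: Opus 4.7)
The plan is to exploit the Hodge index theorem on the smooth surface $S$. Since $D$ is nef and big we have $D^2 > 0$, and so the restriction of the intersection form to the hyperplane $D^\perp \subset N^1(S)_{\mathbb{R}}$ is negative definite.

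First I would split $D_1$ and $D_2$ orthogonally with respect to $D$: setting $\alpha_i := (D \cdot D_i)/D^2$ and $E_i := D_i - \alpha_i D$, one has $D_i \equiv \alpha_i D + E_i$ with $E_i \in D^\perp$. The relation $D \equiv D_1 + D_2$ forces $\alpha_1 + \alpha_2 = 1$ and $E_2 \equiv -E_1$. Since $D$ is nef and $D_i$ is pseudo-effective, $D \cdot D_i \geq 0$, hence $\alpha_i \geq 0$.

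A direct bilinearity computation, using $D \cdot E_i = 0$ and $E_2 \equiv -E_1$, gives
\[
D_1 \cdot D_2 \;=\; \alpha_1 \alpha_2\, D^2 \;-\; E_1^2.
\]
The first term is nonnegative, and by Hodge index $E_1^2 \leq 0$ with equality iff $E_1 \equiv 0$. If $E_1 \not\equiv 0$, then $-E_1^2 > 0$ and the whole sum is strictly positive. If instead $E_1 \equiv 0$, then $D_1 \equiv \alpha_1 D$, and the hypothesis that $D_1$ is numerically non-trivial forces $\alpha_1 > 0$; the same reasoning applied to $D_2 \equiv \alpha_2 D$ gives $\alpha_2 > 0$, so once more $D_1 \cdot D_2 = \alpha_1 \alpha_2 D^2 > 0$.

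There is no serious obstacle here: the entire argument is a one-shot application of Hodge index combined with the nef/pseudo-effective pairing inequality. The only care needed is to verify that both potentially degenerate configurations — the case $E_1 \equiv 0$ and the case $\alpha_1 \alpha_2 = 0$ — are ruled out precisely by the assumption that each $D_i$ is numerically non-trivial.
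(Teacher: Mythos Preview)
Your argument is correct: the orthogonal decomposition with respect to $D$ together with the Hodge index theorem gives $D_1\cdot D_2=\alpha_1\alpha_2 D^2-E_1^2$, and your case split on whether $E_1\equiv 0$ handles all possibilities. The paper does not supply its own proof of this lemma but simply cites \cite{Sakai}; your Hodge-index proof is essentially the one given there, so there is nothing substantively different to compare.
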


Now, we can proceed with the proof of the main proposition in this subsection.

\begin{proof}[Proof of Proposition \ref{proposition:andrea_general}]
The first case is covered by Proposition \ref{proposition:terakawa}, so we may assume that $X$ is of general type or quasi-elliptic with $\kappa(X)=1$.

Let $\pi \colon Y \to X$ be a blow-up at $q \in X$ with the exceptional curve $F$. Given that $q$ is a base point of $K_X + D$, from the exact sequence
\[
0 \rightarrow \mcO_Y(\underbrace{\pi^*(K_X + D) -F}_{K_Y + \pi^*D - 2F}) \longrightarrow \mcO_Y(\pi^*(K_X + D)) \longrightarrow \mcO_F(\pi^*(K_X+D)) \rightarrow 0
\]
we obtain that
\[
H^1(Y, \mcO_Y(-\pi^*D + 2F)) = H^1(Y, \mcO_Y(K_Y + \pi^*D -2F)) \neq 0. 
\]
Set $\overline{D} \defeq \pi^*D - 2F$. Since
\[
\overline{D}^2 = D^2 - 4,
\]
we have $\overline{D}^2>0$, and the assertions $(1)$ and $(2)$ in Proposition \ref{proposition:key_for_andrea} are satisfied. Further, using that $H^0(Y, \overline{D}) = H^0(X, \mcO_X(D)\otimes m_q^2)$ and $\vol(D)>4$, one can easily check that $\overline{D}$ is big.  Hence, by Proposition \ref{proposition:key_for_andrea}, there exists a non-zero non-exceptional effective divisor $\overline{E}$ on $Y$, such that
\begin{align*}
&k\overline{D} - 2\overline{E} \text{ is big, and} \\
&(k\overline{D} - \overline{E}) \cdot \overline{E} \leq 0, \text{ and} \\
&0 \leq D \cdot E \leq 2k-1, \\
\end{align*}
where $E = \pi_* \overline{E}$.

To finish the proof, it is enough to show that $\overline{E}$ contains a component, which intersects $F$ properly. Its pushforward onto $X$ would be the sought-for curve $C$. 

Assume that the claim is not true, that is $\overline{E} = \mu^*E + aF$ for $a\geq 0$. We have that
\[
0 \geq (k\overline{D} - \overline{E})\cdot \overline{E} = (kD-E) \cdot E + (2k+a)a. 
\]

This implies $kD\cdot E \leq E^2$. Since $D\cdot E \geq 0$, it holds that $E^2 \geq 0$. Given $kD - 2E$ is big, we may apply Lemma \ref{lemma:sakai} with $kD = (kD-2E) + 2E$, and obtain $kD \cdot E > 2E^2$. This is a contradiction with the other inequalities in this paragraph.
\end{proof}

\section{Base point freeness}
The goal of this section is to prove the base point free part of Theorem \ref{theorem:main-mini}.

\begin{lemma} \label{lemma:cone_corollary} Let $L$ be an ample Cartier divisor on a normal projective surface $X$. Let $\pi \colon \wtX \to X$ be the minimal resolution of singularities. Then $K_{\wtX} + 3\pi^*L$ is nef, and $K_{\wtX} + n\pi^*L$ is nef and big for $n\geq 4$.
\end{lemma}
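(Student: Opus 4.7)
The plan is to apply the cone theorem recalled in the preliminaries directly on $\wtX$ (which is smooth, and in particular normal) together with a short approximation argument. The point is that $\pi^{*}L$, being the pullback of an ample Cartier divisor under a birational morphism, is nef and has the same positive volume as $L$, hence is nef and big. This lets me approximate $\pi^{*}L$ by ample classes without leaving the relevant cone.

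For the first assertion, I would fix an auxiliary ample Cartier divisor $H$ on $\wtX$ and, for each rational $\epsilon>0$, consider $\pi^{*}L + \epsilon H$. Since $\pi^{*}L$ is nef and $\epsilon H$ is ample, the sum is ample; after clearing denominators it is a genuine ample Cartier divisor on the normal projective surface $\wtX$. The cone theorem then gives that
\[
K_{\wtX} + 3(\pi^{*}L + \epsilon H)
\]
is nef. Because the nef cone in $N^{1}(\wtX)_{\mbR}$ is closed, letting $\epsilon\to 0$ yields that $K_{\wtX} + 3\pi^{*}L$ is nef.

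For the second assertion, I would use the decomposition
\[
K_{\wtX} + n\pi^{*}L \;=\; (K_{\wtX} + 3\pi^{*}L) + (n-3)\pi^{*}L
\]
for $n\geq 4$. The first summand is nef by the previous step; the second is nef and big (pullback of an ample divisor under a birational morphism between projective varieties of the same dimension). Sum of two nef divisors is nef, and sum of a nef and a big divisor is big by Kodaira's lemma (write the big summand as ample plus effective and add), so $K_{\wtX} + n\pi^{*}L$ is nef and big.

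The only real subtlety is the approximation step: the cited cone theorem is stated for ample Cartier divisors, not for nef $\mbR$-classes, so I would be careful to choose $\epsilon\in\mbQ_{>0}$ and a sufficiently divisible positive integer $N$ so that $N(\pi^{*}L + \epsilon H)$ is an integral ample Cartier divisor to which the theorem applies, and then rescale before passing to the limit. No use of the minimality of the resolution or of any information about exceptional curves of $\pi$ is needed for this argument — all of the work is done on $\wtX$ by the cone theorem plus the closedness of the nef cone.
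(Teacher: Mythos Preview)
Your approximation argument for nefness has a genuine gap, and in fact the conclusion you draw --- that minimality of the resolution is irrelevant --- is false.

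The consequence of the cone theorem being invoked is: \emph{for an ample Cartier divisor $A$ on a normal projective surface $Y$, the divisor $K_Y + 3A$ is nef}. This relies on the integrality of $A$: an extremal ray is generated by a curve $C$ with $-K_Y\cdot C\leq 3$, and one needs $A\cdot C\geq 1$, which comes from $A\cdot C$ being a positive integer. The statement does \emph{not} extend to ample $\mbQ$-divisors; e.g.\ on $\mbP^2$ with $A=\tfrac12 H$ one has $K_{\mbP^2}+3A=-\tfrac32 H$. Your ``clear denominators and rescale'' step therefore fails: if $N(\pi^*L+\epsilon H)$ is integral ample, the cone theorem gives that $K_{\wtX}+3N(\pi^*L+\epsilon H)$ is nef, and dividing by $N$ yields only that $\tfrac1N K_{\wtX}+3(\pi^*L+\epsilon H)$ is nef, not $K_{\wtX}+3(\pi^*L+\epsilon H)$. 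So the limit $\epsilon\to 0$ does not give what you want.

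That minimality is essential can be seen directly: if $\pi\colon\wtX\to X$ is the blow-up of a smooth point with exceptional curve $E$, then $K_{\wtX}\cdot E=-1$ and $\pi^*L\cdot E=0$, so $(K_{\wtX}+3\pi^*L)\cdot E=-1<0$. The paper's proof handles exactly this issue: one tests against curves $C$; if $K_{\wtX}\cdot C\geq 0$ there is nothing to do, and otherwise by the cone theorem one may take $C$ extremal with $K_{\wtX}\cdot C\geq -3$. If $C$ is not $\pi$-exceptional then $\pi^*L\cdot C\geq 1$ and we are done; if $C$ were $\pi$-exceptional then contracting it would produce a smooth surface, contradicting minimality of $\pi$. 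Your argument for the second assertion (nef plus nef-and-big is nef and big) is fine and matches the paper's.
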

\begin{proof}

Take a curve $C$. We need to show that $(K_{\wtX} + 3\pi^*L) \cdot C \geq 0$. If $K_{\wtX} \cdot C \geq 0$, then the inequality clearly holds. Thus, by cone theorem (\cite[Theorem 3.13]{Tanaka2dim} and \cite[Remark 3.14]{Tanaka2dim}), we need to prove it, when $C$ is an extremal ray satisfying $K_{\wtX} \cdot C < 0$. In such a case, we have that $K_{\wtX} \cdot C \geq -3$.

If $C$ is not an exceptional curve, then $3\pi^*L \cdot C \geq 3$, and so the inequality holds. But $C$ cannot be exceptional, because then its contraction would give a smooth surface (see \cite[Theorem 1.28]{KollarMori}), and so $\wtX$ would not be a minimal resolution. This concludes the first part of the lemma. 

As for the second part, $K_{\wtX} + n\pi^*L$ is big and nef for $n \geq 4$, since adding a nef divisor to a big and nef divisor gives a big and nef divisor.
\end{proof}

The following proposition yields the first step in the proof of Theorem \ref{theorem:main-mini}.

\begin{proposition} \label{proposition:part_one} Let $X$ be a normal projective surface defined over an algebraically closed field of characteristic $p>3$. Assume that $mK_X$ is Cartier for some $m \in \mbN$. Let $A$ be an ample Cartier divisor on $X$. Then 
\[
\mathrm{Bs}(m(aK_{X} + bA) + N) \subseteq \Sing(X),
\]
for any nef Cartier divisor $N$, where $a=2$ and $b=7$.
\end{proposition}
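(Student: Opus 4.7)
The plan is to pull back to the minimal resolution $\pi \colon \widetilde X \to X$ and apply Proposition~\ref{proposition:andrea_general}. Set $L \defeq m(2K_X+7A)+N$, fix a smooth point $q \in X$, and let $\tilde q = \pi^{-1}(q)$, a smooth point of $\widetilde X$ off the $\pi$-exceptional locus. Since $L$ is Cartier and $\pi_*\mcO_{\widetilde X} = \mcO_X$, we have $H^0(X,L) \cong H^0(\widetilde X, \pi^*L)$, so it suffices to show $\tilde q \notin \mathrm{Bs}(\pi^*L)$. By minimality of $\widetilde X$, $K_{\widetilde X}\cdot E_i \geq 0$ for every $\pi$-exceptional curve $E_i$; combined with the negative definiteness of the exceptional intersection form, this forces $\pi^*K_X = K_{\widetilde X}+F$ for an effective $\pi$-exceptional $\mbQ$-divisor $F$, with $\tilde q \notin \Supp F$.

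The key rearrangement is
\[
\pi^*L - F \;=\; K_{\widetilde X} + D, \qquad D \defeq (2m-1)\pi^*(K_X+3A) + (m+3)\pi^*A + \pi^*N.
\]
Since $K_X+3A$ is nef by the cone theorem, $A$ is ample, and $N$ is nef, every summand of $D$ is nef and the middle summand is big; hence $D$ is big and nef on $\widetilde X$. For any non-$\pi$-exceptional integral curve $C \subset \widetilde X$ with image $\bar C = \pi(C)$, the projection formula together with $A \cdot \bar C \geq 1$ gives
\[
D \cdot C \;\geq\; (m+3)\,A \cdot \bar C \;\geq\; m+3 \;\geq\; 4.
\]
Next I would apply Proposition~\ref{proposition:andrea_general} to $K_{\widetilde X}+D$ on $\widetilde X$. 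Since $p > 3$, the surface $\widetilde X$ is neither quasi-elliptic with $\kappa=1$ nor in characteristic $2$, so only cases (2) or (4) of the proposition apply, and both conclude that any curve through a base point satisfies $D \cdot C \leq 1$. The bigness threshold ($D^2 > 4$, or $D^2 > \vol(K_{\widetilde X})+4$ in the general-type case) is verified by explicit intersection computation: the $(m+3)\pi^*A$ summand alone contributes at least $(m+3)^2 \geq 16$ to $D^2$, and in the general-type case the $(2m-1)^2$-coefficient on $(\pi^*K_X)^2$ together with $(K_X+3A)$-nefness absorbs the $\vol$-term.

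Consequently, if $\tilde q$ were in $\mathrm{Bs}(K_{\widetilde X}+D) = \mathrm{Bs}(\pi^*L-F)$, a curve $C$ through $\tilde q$ would satisfy $D \cdot C \leq 1$; but $\tilde q \notin \Supp F$ forces $C$ to be non-exceptional, giving $D \cdot C \geq 4$, a contradiction. Hence $\tilde q$ is not in the base locus of the integer Cartier divisor $\pi^*L - \lceil F \rceil$, and multiplying a section of $\mcO_{\widetilde X}(\pi^*L-\lceil F \rceil)$ that is nonzero at $\tilde q$ by the canonical section of $\mcO_{\widetilde X}(\lceil F \rceil)$—itself nonzero at $\tilde q \notin \Supp F$—produces a section of $\pi^*L$ nonvanishing at $\tilde q$, so $q \notin \mathrm{Bs}(L)$. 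The main obstacle is the $D^2$-verification in the general-type case, together with bridging the $\mbQ$-Cartier nature of $F$ (arising because $K_X$ is only $\mbQ$-Cartier) with the integer-Cartier formulation of Proposition~\ref{proposition:andrea_general}; both are handled by the rounding to $\lceil F \rceil$ in the final step.
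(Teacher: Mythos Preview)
Your overall strategy---pull back to the minimal resolution, write the relevant divisor as $K_{\widetilde X}+D$, and invoke Proposition~\ref{proposition:andrea_general} to rule out base points off the exceptional locus---matches the paper's. The genuine gap is the integrality of $D$. Your $D=(2m-1)\pi^*K_X+7m\pi^*A+\pi^*N$ is only a $\mbQ$-divisor, since $\pi^*K_X=K_{\widetilde X}+F$ with $F$ having rational coefficients; Proposition~\ref{proposition:andrea_general} concerns base points of the integral linear system $|K_{\widetilde X}+D|$ and does not apply. Your proposed patch---round to $\lceil F\rceil$ ``in the final step''---does not repair this: rounding \emph{after} invoking the proposition is meaningless, and rounding \emph{before} means replacing $D$ by $D'=D-(\lceil F\rceil-F)$, which need not be nef (on an exceptional curve $E_i$ one has $D'\cdot E_i=-(\lceil F\rceil-F)\cdot E_i$, whose sign is uncontrolled).

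The paper sidesteps this by working with $K_{\widetilde X}$ rather than $\pi^*K_X$: it proves the auxiliary claim $\mathrm{Bs}(2K_{\widetilde X}+7\pi^*A+M)\subseteq E$ for every nef Cartier $M$, applying Proposition~\ref{proposition:andrea_general} with the honest integral divisor $D=K_{\widetilde X}+7\pi^*A+M$ (nef and big by Lemma~\ref{lemma:cone_corollary}), and then specializes $M=(m-1)(2K_{\widetilde X}+7\pi^*A)+\pi^*N$. In effect this subtracts the \emph{Cartier} divisor $2mF$ from $\pi^*L$ rather than $F$ or $\lceil F\rceil$, after which your multiplication-by-canonical-section step goes through. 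The volume hypothesis in the general-type case is handled cleanly by log-concavity (Theorem~\ref{theorem:volume}): $\vol(D)^{1/2}\geq\vol(K_{\widetilde X})^{1/2}+\vol(7\pi^*A+M)^{1/2}\geq\vol(K_{\widetilde X})^{1/2}+7$, hence $D^2>\vol(K_{\widetilde X})+4$; your ``$(2m-1)^2$ absorbs the $\vol$-term'' can be made rigorous in the same spirit, but as written it is not a proof. The endgame (the curve through $\tilde q$ is non-exceptional because $\tilde q\notin E$, so $D\cdot C\geq 4>1$) is correct and identical to the paper's.
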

The proposition is even true for $a=2$ and $b=6$, but in this case, $aK_X + bA$ need not be ample.
\begin{proof}
Let $\pi \colon \overline{X} \to X$ be the minimal resolution of singularities with the exceptional locus $E$. First, we claim that
\[
\mathrm{Bs}(2K_{\ovX} + 7\pi^*A + M) \subseteq E.
\] 
for any nef Cartier divisor $M$ on $\ovX$. Assume this is true. If $m=1$, then the proposition follows automatically. In general, we note that $2K_{\ovX} + 7\pi^*A$ is nef by Lemma \ref{lemma:cone_corollary}, and so setting $M = (m-1)(2K_{\ovX} + 7\pi^*A)+\pi^*N$ yields 
\[
\mathrm{Bs}(m(2K_{\ovX} + 7\pi^*A)+\pi^*N) \subseteq E.
\]
In particular, $\mathrm{Bs}(m(2K_{X} + 7A)+N) \subseteq \pi(E)$, which concludes the proof. 

Hence, we are left to show the claim. Assume by contradiction that there exists a base point $q \in \ovX$ of $2K_{\ovX} + 7\pi^*A + M$ such that $q \not \in E$.

We apply Proposition \ref{proposition:andrea_general} for $D = K_{\ovX} + 7\pi^*A +M$. The assumptions are satisfied, because, by Lemma \ref{lemma:cone_corollary}, $D$ is big and nef, and, by Theorem \ref{theorem:volume},
\begin{align*}
\vol(D) &\geq \vol(K_{\ovX}) + 49, \text{ if }X \text{ is of general type, and }\\
\vol(D) &\geq \vol(K_{\ovX}+4\pi^*A) + \vol(3\pi^*A) > 9 \text{ in general}.
\end{align*}
Here, we used that $K_{\ovX} + 4\pi^*A$ is nef and big by Lemma \ref{lemma:cone_corollary}.  

Therefore, there exists a curve $C$ containing $q$ such that
\[
C \cdot  D \leq 1.
\]
We can write $D = (K_{\ovX} + 3\pi^*A+M) + 4\pi^*A$. As $C$ is not exceptional, $C \cdot \pi^*A > 0$. Thus, we obtain a contradiction.
\end{proof}

Applying above Proposition \ref{proposition:part_one} and Theorem \ref{theorem:2dimp>5}, the base point free part of  Theorem \ref{theorem:main-mini} follows from the following proposition by taking $L \defeq m(aK_X + bA) - K_X$.

\begin{proposition} \label{proposition:part_two} Let $X$ be an $F$-pure $\mbQ$-factorial projective surface defined over an algebraically closed field of characteristic $p>0$. Let $L$ be an ample $\mbQ$-divisor on $X$ such that $K_X + L$ is an ample Cartier divisor and
\[
\mathrm{Bs}(K_X + L + M) \subseteq \Sing(X),
\]
for every nef Cartier divisor $M$. Then $2(K_X + L)+N$ is base point free for every nef Cartier divisor $N$.
\end{proposition}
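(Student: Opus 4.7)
Set $H := K_X + L$, an ample Cartier divisor by hypothesis, and fix a nef Cartier divisor $N$. Smooth points are handled immediately: writing $2H + N = H + (H + N)$ and noting that $H + N$ is a nef Cartier divisor, the hypothesis gives $\mathrm{Bs}(2H + N) \subseteq \Sing(X)$. I will argue by contradiction that no singular point lies in this base locus.

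Fix $x \in \Sing(X) \cap \mathrm{Bs}(2H+N)$. By Lemma \ref{lemma:index_divisible_by_p}, there is an effective $\mbQ$-divisor $B$ with arbitrarily small coefficients such that $(X, B)$ is $F$-pure at $x$, $L - B$ is ample, and $(p^e - 1)(K_X + B)$ is Cartier for every $e$ in some infinite arithmetic progression. Setting $D_e := H + L + N - B$ so that $K_X + B + D_e = 2H + N$ is Cartier, Proposition \ref{proposition:trace_and_fsplit} produces a surjection of coherent sheaves
\[
\Tr^e \colon F^e_* \mcO_X(\mcM_e) \twoheadrightarrow \mcO_X(2H + N),
\]
where
\[
\mcM_e \;=\; K_X + B + p^e D_e \;=\; (p^e+1) H + (p^e-1)(L - B) + p^e N.
\]
Crucially, $\mcM_e$ decomposes as $H + M_e$ with $M_e := p^e H + (p^e-1)(L - B) + p^e N$ nef and Cartier, so the hypothesis applied to $M_e$ yields $\mathrm{Bs}(\mcM_e) \subseteq \Sing(X)$ as well.

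Following the Cascini--Tanaka--Xu strategy of constructing $F$-pure centers, I would combine $(X, B)$ with an auxiliary effective $\mbQ$-divisor $\Gamma \sim_{\mbQ} c\, \mcM_e$, built from a suitable linear combination of members of $|\mcM_e|$ forced to pass through $x$ with high multiplicity, in such a way that $(X, B + \Gamma)$ is $F$-pure on all of $X$ yet has $\{x\}$ as its unique sharp $F$-pure center. Schwede's $F$-adjunction to the zero-dimensional center $\{x\}$ then produces, combined with Fujita vanishing (Theorem \ref{theorem:fujita_vanishing}) for the positive Cartier divisor $\mcM_e$ when $e \gg 0$, a surjection
\[
H^0\bigl(X, \mcO_X(\mcM_e - \Gamma)\bigr) \twoheadrightarrow k(x),
\]
whose composition with the twisted trace of $(X, B + \Gamma)$ descends through $\Tr^e$ to exhibit a global section of $\mcO_X(2H + N)$ which does not vanish at $x$, contradicting $x \in \mathrm{Bs}(2H + N)$.

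The main obstacle I foresee is the construction of $\Gamma$ with precisely the correct multiplicity profile: on the one hand $\Gamma$ must have high enough multiplicity at $x$ to force non-$F$-purity of $(X, B + \Gamma)$ at $x$; on the other hand, $\Gamma$ must not introduce additional sharp $F$-pure centers elsewhere on $X$, so that the adjunction produces the sharp center $\{x\}$ alone. This is exactly where the hypothesis $\mathrm{Bs}(H + M) \subseteq \Sing(X)$ plays a decisive role: it guarantees that members of $|\mcM_e|$ are flexible away from the singular locus, allowing one to prescribe the behavior of $\Gamma$ at smooth points freely while tuning the multiplicity at the isolated singular point $x$ for $e$ large enough.
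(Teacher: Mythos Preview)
Your outline has the right shape—reduce to a singular point, invoke the trace map via Lemma \ref{lemma:index_divisible_by_p}, push sections down through Frobenius, and close with Fujita vanishing—but the step you flag as the ``main obstacle'' is in fact the entire content of the proof, and your sketch does not resolve it. The paper's argument is far more concrete and avoids $F$-adjunction and sharp centers altogether. Instead of building $\Gamma$ from $|\mcM_e|$, one chooses two divisors $D_1, D_2 \in |K_X+L|$ with $q \in D_1 \cap D_2$ and $\dim(D_1 \cap D_2)=0$, sets $W = D_1 \cap D_2$ scheme-theoretically so that $I_W = I_{D_1}+I_{D_2}$, and then takes $\lambda_1,\lambda_2 \in \mbZ_{\geq 0}$ \emph{maximal} such that $\Tr_{X,\Delta}$ remains surjective at $q$ for $\Delta = B + \frac{\lambda_1}{p^e-1}D_1 + \frac{\lambda_2}{p^e-1}D_2$. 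Maximality forces $\Tr_{X,\Delta}$ to carry $F^e_*(\mcL \otimes I_W)$ into $m_q$, so the twisted trace factors through $\mcO_W$ in the upper right and $k(q)$ in the lower right; Fujita vanishing on $I_W$ then gives the upper surjectivity.

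The numerical ingredient you are missing—and the one place where $q \in \Sing(X)$ is genuinely used beyond the initial reduction—is Lemma \ref{lemma:log_canonical_coefficients}: if $(X, B + a_1 D_1 + a_2 D_2)$ is log canonical at a \emph{singular} point and $D_1, D_2$ are Cartier through it, then $a_1 + a_2 \leq 1$ (whereas at a smooth point the bound would only be $2$). Since $(X,\Delta)$ is $F$-pure hence log canonical at $q$, this yields $\lambda_1 + \lambda_2 \leq p^e-1$, which is exactly what lets $K_X + \Delta + p^e H$ decompose as $\frac{p^e-1}{2}L$ plus a nef divisor, so that the required $H^1$ vanishes. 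Your proposed mechanism—flexibility of $|\mcM_e|$ away from $\Sing(X)$—does not supply this bound; the control on the auxiliary divisor's coefficients comes from the singularity of $q$ itself, not from genericity at smooth points.
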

If we assume that $\dim \mathrm{Bs}(K_X+L+M) = 0$, then the same proof will give us that $3(K_X+L)+N$ is base point free.

Before proceeding with the proof, we would like to give an example explaining an idea of how to show the proposition if we worked in characteristic zero.
\begin{remark} \label{example:char0} Here, $X$ is a smooth surface defined over an algebraically closed field $k$ of characteristic zero, and $L$ is an ample Cartier divisor on it. The goal of this remark is to prove the following statement  by applying a well-known strategy:
\begin{quote}\textit{if $K_X + L$ is ample and $\dim \mathrm{Bs}(K_X + L) = 0$, then $3(K_X + L)$ is base point free.}
\end{quote}

Take any point $q \in \mathrm{Bs}(K_X + L)$. It is enough to show that $3(K_X+L)$ is base point free at $q$. By assumptions, $K_X+L$ defines a finite map outside of its zero dimensional base locus, and so there exist divisors $D_1, D_2, D_3 \in |K_X+L|$ without common components such that the multiplier ideal sheaf $\mcJ(X,\Delta)$ for $\Delta = \frac{2}{3}(D_1 + D_2+ D_3)$ satisfies
\begin{align*}
\dim\, &\mcJ(X,\Delta) = 0, \text{ and }\\
 q \in \ &\mcJ(X,\Delta) .
\end{align*}
Note that $\Delta \sim_{\mbQ} 2(K_X +L)$.

Let $W$ be a zero-dimensional subscheme defined by $\mcJ(X,\Delta)$. We have the following exact sequence
\[
0 \to \mcO_X(\hspace{0.5pt}3(K_X+L)) \otimes \mcJ(X,\Delta) \to \mcO_X(\hspace{0.5pt}3(K_X+L)) \to \mcO_W(\hspace{0.5pt}3(K_X + L)) \to 0.
\]

Since $3(K_X+L) \sim_{\mbQ} K_X + \Delta + L$, by Nadel vanishing theorem (\cite[Theorem 9.4.17]{Laz2})
\[
H^1(X, \mcO_X(\hspace{0.5pt}3(K_X + L)) \otimes \mcJ(X,\Delta)) = 0,
\]
and so
\[
H^0(X, \mcO_X(\hspace{0.5pt}3(K_X+ L))) \longrightarrow H^0(W, \mcO_W(\hspace{0.5pt}3(K_X + L)))
\]
is surjective. Since $\dim W = 0$, we get that $3(K_X + L)$ is base point free along $W$, and so it is base point free at $q$.
\end{remark}

\begin{proof}[Proof of Proposition \ref{proposition:part_two}]
Take an arbitrary closed point $q \in X$. We need to show that $q \not \in \mathrm{Bs}(2(K_X +L)+N)$. By taking $M = K_X + L + N$ in the assumption, we get that $\mathrm{Bs}(2(K_X + L) + N) \subseteq \Sing(X)$. Hence, we can assume $q \in \Sing(X)$.

By assumptions, $K_X+L$ defines a finite map outside of its zero dimensional base locus, so there exist divisors $D_1, D_2 \in |K_X+L|$ such that $\dim (D_1 \cap D_2) = 0$ and $q \in D_1\cap D_2$. Let $W$ be the scheme defined by the interesection of $D_1$ and $D_2$. By definition, $I_W = I_{D_1} + I_{D_2}$.

By Theorem \ref{theorem:fujita_vanishing}, we can choose $e>0$ such that 
\[
H^1\Big(X, \mcO_X\Big(\frac{p^e-1}{2}L +M\Big) \otimes I_W\Big) = 0
\]
for any nef Cartier divisor $M$.

By Lemma \ref{lemma:index_divisible_by_p}, we know that there exists an effective $\mbQ$-divisor $B$ such that
\[
(p^e-1)(K_X + B) 
\]
is Cartier, and
\[
	\mathrm{Tr}_{X,B} \colon F^e_*\mcO_X(-(p^e-1)(K_X+B)) \longrightarrow \mcO_X
\]
is surjective at $q$, for enough divisible $e \gg 0$. If the Gorenstein index of $X$ is not divisible by $p$, then we can take $B=0$. Further, we may assume that $\frac{1}{2}L - B$ is ample. 

Now, take maximal $\lambda_1, \lambda_2 \in \mbZ_{\geq 0}$ such that
\[
\mathrm{Tr}_{X,\Delta} \colon F^e_* \mcL \to \mcO_X
\]
is surjective at the stalk $\mcO_{X,q}$, where 
\begin{align*}
\mcL &\defeq \mcO_X(-(p^e-1)(K_X + B) - \lambda_1D_1 - \lambda_2 D_2)\text{, and} \\
\Delta &\defeq B+\frac{\lambda_1}{p^e-1}D_1 + \frac{\lambda_2}{p^e-1} D_2.
\end{align*}

The pair $(X,\Delta)$ is $F$-pure by Proposition {\ref{proposition:trace_and_fsplit}}. We want to show the existence of the following diagram:
\begin{center}\begin{tikzcd}
F^e_* \mcL \arrow{r} \arrow{d}{\Tr_{X,\Delta}} & F^e_* \big(\mcL|_{W}\big) \arrow{d} \\
\mathcal{O}_X \arrow{r} & \mathcal{O}_{X,q}/m_q 
\end{tikzcd}\end{center}
To show that such a diagram exists we need to prove that the image of $F^e_* (\mcL \otimes I_W)$ under $\mathrm{Tr}_{X,\Delta}$ is contained in $m_q$. This follows from the fact that $I_W  = \mcO(-D_1) + \mcO(-D_2)$ and from the maximality of $\lambda_1, \lambda_2$. More precisely the image of 
\[
F^e_* \mathcal{O}_X(-(p^e-1)(K_X + B) - (\lambda_1+1)D_1 - \lambda_2 D_2)
\]
must be contained in $m_q$, and analogously for $\lambda_2$ replaced by $\lambda_2+1$.

So, we tensor this diagram by the line bundle $\mcO_X(K_X + \Delta + H)$, where
\[
	H \defeq  2(K_X + L) - (K_X + \Delta) + N,
\]
and take $H^0$ to obtain the diagram
\begin{center}\begin{tikzcd}
H^0\big(X, F^e_*\mcO_X(K_X + \Delta + p^eH)\big) \arrow{d}{} \arrow{r} & H^0\big(W, \mathcal{O}_W\big) \arrow{d} \\
H^0\big(X,\mcO_X(K_X + \Delta + H)\big) \arrow{r} & H^0\big(q,\mathcal{O}_{X,q}/m_q \big),
\end{tikzcd}
\end{center}

Note that $K_X + \Delta + H = 2(K_X + L) + N$. Further, by Theorem \ref{theorem:fsplit_and_lc}, $(X, \Delta)$ is log canonical at $q \in X$, and so by Lemma \ref{lemma:log_canonical_coefficients} we get
\[
\frac{\lambda_1}{p^e-1} + \frac{\lambda_2}{p^e-1} \leq 1.
\]  
Therefore, $H$ is ample and \[K_X + \Delta +p^eH \sim \frac{p^e{-}1}{2}L + \underbrace{(p^e{-}1)\Big(\frac{1}{2}L - B\Big) {+} (p^e{+}1 {-} \lambda_1 {-} \lambda_2)(K_X + L) {+} p^eN}_{\mathrm{nef}}.\]

The right vertical arrow is surjective, since $\mathrm{Tr}_{X,\Delta} \colon F^e_* \mcL \to \mcO_X$ is surjective, and $\dim W = 0$. The upper horizontal arrow is surjective as \[H^1\Big(X, \mcO_X\Big(\frac{p^e-1}{2}L + M\Big) \otimes I_W\Big)=0\]
for any nef Cartier divisor $M$. Thus, the lower horizontal arrow is surjective, and so the proof of the base point freeness is completed.
\end{proof}

The following lemma was used in the proof.
\begin{lemma} \label{lemma:log_canonical_coefficients} Let $(X,B + a_1D_1 +a_2D_2)$ be a log canonical two dimensional pair, such that $B$ is an effective $\mbQ$-divisor, $a_1,a_2 \in \mbR_{\geq 0}$, and $D_1$ together with $D_2$ are Cartier divisors intersecting at a singular point $x \in X$. Then $a_1 + a_2 \leq 1$. 
\end{lemma}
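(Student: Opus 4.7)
The plan is to test log canonicity against a single carefully chosen divisor over $x$, namely a prime exceptional divisor $E$ of the minimal resolution $\pi\colon Y\to X$, and to read off the bound $a_1+a_2\leq 1$ from the discrepancy inequality along that one divisor. Since $(X,B+a_1D_1+a_2D_2)$ is lc, $X$ is normal, so the minimal resolution exists; and because $x$ is singular, there is a prime divisor $E\subset Y$ with $\pi(E)=x$. The two ingredients I need about $E$ are
\begin{itemize}
\item $\mult_E(\pi^*D_i)\geq 1$ for $i=1,2$, and
\item $k_E\leq 0$, where $K_Y=\pi^*K_X+\sum_j k_j E_j$ is the discrepancy expansion.
\end{itemize}

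The first ingredient is the Cartier hypothesis together with the fact that $E$ is centered at $x$. If $f_i\in\mfm_x\subset\mcO_{X,x}$ is a local equation for $D_i$, then $\mult_E(\pi^*D_i)=v_E(f_i)\geq v_E(\mfm_x)\geq 1$, where $v_E$ is the valuation attached to $E$. The second ingredient is the standard non-positivity of discrepancies on the minimal resolution of a normal surface singularity; I would either cite it or sketch it as follows. On the minimal resolution every exceptional curve satisfies $E_j^2\leq -2$ (no $(-1)$-curves), so by adjunction $K_Y\cdot E_j=2g(E_j)-2-E_j^2\geq 0$. The vector $(k_j)$ solves $\sum_i k_i(E_i\cdot E_j)=K_Y\cdot E_j$ where the intersection matrix $M=(E_i\cdot E_j)$ is negative definite (Mumford) with non-negative off-diagonal entries; such an $M$ has $M^{-1}$ with non-positive entries, so $k_j\leq 0$ for all $j$, and in particular $k_E\leq 0$.

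With these two ingredients in hand, the discrepancy formula (applied on $Y$, which need not be a log resolution, but $E$ is still a prime exceptional divisor on a birational model and the discrepancy is well-defined) gives
\[
a(E;X,B+a_1D_1+a_2D_2)=k_E-\mult_E(B)-a_1\mult_E(\pi^*D_1)-a_2\mult_E(\pi^*D_2),
\]
and log canonicity forces this to be $\geq -1$. Rearranging and using $\mult_E(\pi^*D_i)\geq 1$, $a_i\geq 0$, $\mult_E(B)\geq 0$, and $k_E\leq 0$, one obtains
\[
a_1+a_2\;\leq\;a_1\mult_E(\pi^*D_1)+a_2\mult_E(\pi^*D_2)\;\leq\;1+k_E-\mult_E(B)\;\leq\;1,
\]
which is the claim. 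The only step that is not routine bookkeeping is the non-positivity $k_E\leq 0$, which is the main obstacle in the sense that it is the only piece relying on substantive input beyond the discrepancy formula; but it is entirely standard in the theory of normal surface singularities.
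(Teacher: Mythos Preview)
Your proof is correct and is essentially the same as the paper's argument. Both pass to the minimal resolution, use that discrepancies there are nonpositive (your $k_E\leq 0$ is the paper's $\Delta_{\wtX}\geq 0$), observe that a Cartier divisor through $x$ pulls back with exceptional multiplicity at least $1$, and then read off $a_1+a_2\leq 1$ from the log canonical inequality along a single exceptional curve; the only difference is that you spell out the standard negativity argument for $k_E\leq 0$, which the paper simply asserts.
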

Of course, the lemma is not true, when $x$ is a smooth point.
\begin{proof}
Consider a minimal resolution of singularities $\pi \colon \wtX \to X$. Write
\[
K_{\wtX} + \Delta_{\wtX} + \pi^*(B + a_1D_1 + a_2D_2) = \pi^*(K_X + B + a_1D_1 + a_2D_2). 
\]
Since $\pi$ is a minimal resolution, we have that $\Delta_{\wtX} \geq 0$. 

Take an exceptional curve $C$ over $x$. Since $D_1$ and $D_2$ are Cartier, the coefficient of $C$ in $\Delta_{\wtX} + \pi^*(B + a_1D_1 + a_2D_2)$ is greater or equal $a_1 + a_2$. Since $(\wtX,\Delta_{\wtX} + \pi^*(B + a_1D_1 + a_2D_2))$ is log canonical, this concludes the proof of the lemma.
\end{proof}

\subsection{Very ampleness}
The goal of this subsection is to show the following proposition and finish off the proof of Theorem \ref{theorem:main-mini}. 
\begin{proposition}[{cf.\ \cite[Corollary 4.5]{SchwedeAdjoint}, \cite[Theorem 1.1]{Keeler}}] \label{proposition:very_ampleness}
 Let $X$ be an $F$-pure projective variety of dimension $n$. Let $D$ be an ample $\mbQ$-Cartier divisor such that $K_X+D$ is Cartier, and let  $L$ be an ample globally generated Cartier divisor. Then $K_X + (n+1)L + D$ is very ample.
\end{proposition}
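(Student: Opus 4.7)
The plan is to adapt the Frobenius trace argument of Proposition \ref{proposition:part_two} to arbitrary dimension and to length-at-most-$2$ subschemes. Writing $\mcM := \mcO_X(K_X + (n+1)L + D)$, it suffices to show that for every closed subscheme $Z \subset X$ of length at most $2$ (two distinct reduced points, or a length-$2$ infinitesimal scheme at one point), the restriction $\rho_Z \colon H^0(X, \mcM) \to H^0(Z, \mcM|_Z)$ is surjective; this gives very ampleness of $\mcM$.

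First I would produce auxiliary divisors. Using global generation and ampleness of $L$, pick $n$ sections $\sigma_1, \dots, \sigma_n \in H^0(X, L)$ whose common zero scheme $W$ is zero-dimensional and contains $Z$; in the infinitesimal case, augment by sections of $|p^e L|$ (very ample for $e \gg 0$) so that $W \supseteq Z$ scheme-theoretically. Set $D_i := (\sigma_i)$. By Lemma \ref{lemma:index_divisible_by_p}, fix an effective $\mbQ$-divisor $B$ with coefficients small enough that $D - B$ remains ample, such that $(p^e - 1)(K_X + B)$ is Cartier and $\Tr^e_{X, B}$ is globally surjective for divisible $e \gg 0$.

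Following the maximality argument of Proposition \ref{proposition:part_two}, I would take $\lambda_1, \dots, \lambda_n \in \mbZ_{\geq 0}$ maximal so that $\Tr^e_{X, \Delta}$ is surjective at every closed point of $|Z|$, for $\Delta := B + \sum_i \frac{\lambda_i}{p^e - 1} D_i$. By maximality combined with $\mcI_W = \sum \mcO_X(-D_i)$, the image of $F^e_*(\mcL \otimes \mcI_W)$ under $\Tr^e_{X, \Delta}$ lies inside $\mcI_Z$, where $\mcL := \mcO_X(-(p^e - 1)(K_X + B) - \sum \lambda_i D_i)$. This produces a commutative diagram
\begin{center}
\begin{tikzcd}
F^e_*\mcL \arrow{r} \arrow{d}{\Tr^e_{X,\Delta}} & F^e_*(\mcL|_W) \arrow{d} \\
\mcO_X \arrow{r} & \mcO_Z.
\end{tikzcd}
\end{center}
Theorem \ref{theorem:fsplit_and_lc} together with a higher-dimensional analogue of Lemma \ref{lemma:log_canonical_coefficients} bounds $\sum \lambda_i / (p^e - 1) \leq n$, so $H := \mcM - (K_X + \Delta)$ remains ample. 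Twisting the diagram by $\mcO_X(K_X + \Delta + H) = \mcM$ and taking $H^0$, the upper horizontal becomes surjective for $e \gg 0$ by Fujita vanishing (Theorem \ref{theorem:fujita_vanishing}) applied to $\mcI_W$ with the ample twist $K_X + \Delta + p^e H$, and the right vertical is surjective since $Z$ is zero-dimensional. A diagram chase yields the surjectivity of $\rho_Z$, proving very ampleness.

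The principal obstacle is Step 1 in the infinitesimal case: global generation of $L$ alone does not cut out second-order data at a point, so one must allow sections of $|p^e L|$, which rescales the coefficients of $\Delta$. Checking that this rescaling stays within the log-canonicity budget $\sum \lambda_i/(p^e - 1) \leq n$ after accounting for multiplicities at $|Z|$, while keeping $H$ ample, is the most delicate technical point. A secondary hurdle is formulating the higher-dimensional generalization of Lemma \ref{lemma:log_canonical_coefficients} needed to control these coefficients in arbitrary dimension.
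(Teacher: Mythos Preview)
The paper takes a completely different and much simpler route. Rather than extending the $F$-pure-threshold/maximality argument of Proposition \ref{proposition:part_two}, it reduces very ampleness to showing that $\mcO_X(K_X+(n+1)L+D)\otimes m_x$ is globally generated at every point $q$, for every $x$. Tensoring the trace $\Tr^e_{X,B}((n+1)L+D-B)$ by $m_x$ and using surjectivity at $q$, it suffices that the source $F^e_*\mcO_X\big(K_X+B+p^e((n+1)L+D-B)\big)\otimes m_x$ be globally generated. This is obtained from a Castelnuovo--Mumford regularity criterion (Lemma \ref{lemma:very_ampleness}): the required vanishings $H^{i+k-1}=0$ hold by Serre vanishing once $e\gg 0$, since twisting by $\mcL^{-(i+k)}$ is absorbed by the $p^e$-multiple. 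No maximality, no coefficient bounds, no analogue of Lemma \ref{lemma:log_canonical_coefficients} are needed.

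Your approach has a genuine gap already in the two-point case. If $Z=\{q_1,q_2\}$ and you take $\lambda_i$ maximal so that $\Tr^e_{X,\Delta}$ is surjective at \emph{both} $q_1$ and $q_2$, then increasing $\lambda_i$ by $1$ only forces failure at \emph{one} of them, say $q_1$; the image of $F^e_*(\mcL\otimes\mcO(-D_i))$ then lands in $m_{q_1}$ but need not land in $m_{q_2}$, hence not in $\mcI_Z=m_{q_1}\cap m_{q_2}$. The infinitesimal case is worse: maximality only pushes the image into $m_q$, never into a prescribed length-$2$ ideal inside $m_q$. Your proposed fix of passing to $|p^eL|$ does not address this, and the higher-dimensional analogue of Lemma \ref{lemma:log_canonical_coefficients} you invoke (which in the paper crucially used that the point is \emph{singular}) is neither available nor obviously true at smooth points in the form you need. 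The paper's Mumford-regularity argument sidesteps all of these issues.
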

The proof follows very closely the strategy described in \cite{Keeler}. Theorems of a similar flavour have been obtained by Schwede (\cite{SchwedeAdjoint}), Smith (\cite{Smith1} and \cite{Smith2}) and Hara (\cite{Hara}). 

First, we need a slight generalization of \cite[Example 1.8.22]{Laz1}.
\begin{lemma}[{c.f.\ \cite[Examples 1.8.18 and 1.8.22]{Laz1}}] \label{lemma:very_ampleness} Let $X$ be a normal projective variety of dimension $n$. Consider a coherent sheaf $\mcF$ and a point $x \in X$. Let $B$ be a globally generated ample line bundle. If
\[
H^{i+k-1}\big(X, \mcF \otimes B^{-(i+k)}\big) = 0,
\] 
for $1 \leq i \leq n$ and $1 \leq k \leq n$, then $\mcF \otimes m_x$ is globally generated.
\end{lemma}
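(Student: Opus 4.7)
The plan is to reduce to Theorem~\ref{theorem:fujita} applied to the sheaf $\mcG := \mcF \otimes m_x$: since $\dim X = n$ we need only verify
\[
H^i\bigl(X, \mcF \otimes m_x \otimes B^{-i}\bigr) = 0 \quad \text{for } 1 \leq i \leq n,
\]
and then Mumford regularity delivers the global generation of $\mcF \otimes m_x$.

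To produce these vanishings I would build a locally free resolution of $m_x$ from negative powers of $B$. Since $B$ is ample and globally generated, $n$ sufficiently general sections $s_1,\ldots,s_n \in H^0(X,B)$ have a common zero scheme whose localization at $x$ equals $\{x\}$, and on the normal variety $X$ they may be arranged to form a regular sequence at $x$. The associated Koszul complex, truncated by one spot and shifted, produces a resolution
\[
0 \longrightarrow B^{-n} \longrightarrow \bigl(B^{-(n-1)}\bigr)^{\oplus \binom{n}{n-1}} \longrightarrow \cdots \longrightarrow \bigl(B^{-1}\bigr)^{\oplus n} \longrightarrow m_x \longrightarrow 0,
\]
at least locally at $x$. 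Tensoring with $\mcF \otimes B^{-i}$ and feeding the result into the hypercohomology spectral sequence gives
\[
E_1^{-j,\,q} \;=\; H^q\bigl(X, \mcF \otimes B^{-(i+j+1)}\bigr)^{\oplus \binom{n}{j+1}} \;\Longrightarrow\; H^{q-j}\bigl(X, \mcF \otimes m_x \otimes B^{-i}\bigr),
\]
indexed by $0 \leq j \leq n-1$. The abutment in total degree $i$ vanishes once every $E_1^{-j,\,i+j}$ on the anti-diagonal $q-j = i$ is killed, i.e.\ once $H^{i+j}(X, \mcF \otimes B^{-(i+j+1)}) = 0$ for $0 \leq j \leq n-1$. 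Re-indexing $k := j+1 \in \{1,\ldots,n\}$, this is exactly the hypothesis
\[
H^{i+k-1}\bigl(X, \mcF \otimes B^{-(i+k)}\bigr) = 0 \qquad (1 \leq i, k \leq n),
\]
so the doubled indexing is designed precisely to wipe out every anti-diagonal of the spectral sequence and, in turn, every relevant cohomology of $\mcG \otimes B^{-i}$.

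The main obstacle I expect is the construction of the Koszul resolution of $m_x$ \emph{globally} and around the singular points of $X$, where $n$ sections of $B$ may fail to cut out a regular sequence. The standard remedy is the moving trick of \cite[Example~1.8.22]{Laz1}: by choosing the $s_i$ generically one arranges the common vanishing locus $Z$ to be zero-dimensional with $x$ an isolated point, and then \emph{Tor}-comparisons let one replace $\mcI_Z$ by $m_x$ up to error supported away from $x$, which contributes nothing to the sought local statement. The rest is bookkeeping of indices that mirrors \cite[Example~1.8.22]{Laz1} and \cite[Theorem~1.1]{Keeler}; the normality hypothesis is what allows the Koszul-type chain to behave well enough for the spectral sequence to collapse as required.
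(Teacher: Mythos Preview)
Your approach is essentially the paper's: reduce to Mumford regularity by showing $H^i(X,\mcF\otimes m_x\otimes B^{-i})=0$, get the analogous vanishing for $I_W$ (with $W$ the zero-dimensional common zero locus of $n$ general sections of $B$) from the Koszul complex as in \cite[Example~1.8.22 and Prop.~B.1.2(ii)]{Laz1}, and then pass from $I_W$ to $m_x$. One correction to your last paragraph: the quotient $m_x/I_W$ and the relevant $\mathrm{Tor}$ sheaf are supported on $W$ (which may well include $x$), not ``away from $x$'', and the mechanism is not that the statement is local but that $\dim W=0$ kills their higher cohomology --- this is exactly the four-term exact sequence chase the paper writes out explicitly.
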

\begin{proof}
Set $\mcF(-i) \defeq F \otimes B^{-i}$. Our goal is to prove that \[H^i(X, \mcF(-i) \otimes m_x) = 0\] for all $i>0$. Then, Theorem \ref{theorem:fujita} would imply the global generatedness of $F \otimes m_x$. 

Since $B$ is ample and globally generated, it defines a finite map and so there exist sections $s_1, s_2, \ldots, s_n \in H^0(X,B)$ intersecting in a zero dimensional scheme $W$ containing $x$. By the same argument as in \cite[Example 1.8.22]{Laz1}, using \cite[Proposition B.1.2(ii)]{Laz1} we get that
\[
H^i(X, \mcF(-i) \otimes I_W) = 0, \text{ for } i>0.
\]

To conclude the proof of the lemma, we consider the following short exact sequence
\[
0 \longrightarrow I_W \longrightarrow m_x \longrightarrow m_x/ I_W \longrightarrow 0,
\]
and tensor it by $\mcF(-i)$, to get a short exact sequence 
\[
0 \longrightarrow \mcG \longrightarrow  \mcF(-i) \otimes I_W  \longrightarrow  \mcF(-i)  \otimes m_x \longrightarrow \mcF(-i) \otimes \big(m_x/ I_W \big)   \longrightarrow 0,
\]
where the term 
\[
\mcG \defeq \ker \big(\mcF(-i) \otimes   I_W  \longrightarrow \mcF(-i) \otimes m_x  \big)
\]
comes from the fact that $\mcF$ may not be flat. Since $m_x/ I_W$ is flat off $W$, we have that 
\[
\dim \Supp \big( \mathrm{Tor}^1 \big(\mcF (-i), m_x/ I_W\big)\big) = 0,\]
and so $H^i(X, \mcG) = 0$ for $i>0$. A simple diagram chasing shows that $H^i(X, \mcF(-i) \otimes m_x  )=0$ for $i>0$, and so we are done.

\end{proof}

\begin{proof}[Proof of Proposition \ref{proposition:very_ampleness}]
Choose a point $q \in X$. To prove the theorem, it is enough to show that $\mcO_X\big(K_X + (n+1)L + D\big) \otimes m_x$ is globally generated at $q$ for all $x \in X$. Set $\mcL \defeq \mcO_X(L)$.

Since $X$ is F-pure, Proposition \ref{proposition:trace_and_fsplit} and Lemma \ref{lemma:index_divisible_by_p} imply that there exists an effective $\mbQ$-divisor $B$ such that 
\[
(p^e-1)(K_X + B)
\]
is Cartier for divisible enough $e>0$ and $\mathrm{Tr}^e_{X,B}$ is surjective at $q$. If the Gorenstein index of $X$ is not divisible by $p$, then we can take $B=0$. Further, we may assume that $D-B$ is ample.
 
Tensoring $\mathrm{Tr}^e_{X,B}((n+1)L + D-B)$ by $m_x$, we obtain a morphism
\[
F^e_* \mcO_X\big(K_X + B + p^e\big((n+1)L + D-B\big)\big) \otimes m_x \longrightarrow \mcO_X\big(K_X + (n+1)L + D\big) \otimes m_x,
\]
which is surjective at $q$, and so it is enough to show that $F^e_* \mcO_X\big(K_X + B + p^e\big((n+1)L + D-B\big)\big) \otimes m_x$ is globally generated for divisible enough $e>0$. 

However, this follows from Lemma \ref{lemma:very_ampleness} as
\begin{align*}
H^{i+k-1}\big(X&, F^e_* \mcO_X\big(K_X + B + p^e\big((n+1)L + D-B\big)\big) \otimes \mcL^{-(i+k)}\big) \\
&= H^{i+k-1}\big(X, \mcO_X\big(K_X + B + p^e\big((n+1 - i-k)L + D-B\big)\big)\big) = 0
\end{align*}
for $e \gg 0$ and $1 \leq i+k -1 \leq n$, by Serre vanishing.
\end{proof}

Now, the proof of the main theorem is straightforward.
\begin{proof}[Proof of Theorem \ref{theorem:main-mini}]
It follows directly from Theorem \ref{theorem:2dimp>5}, Proposition \ref{proposition:part_one}, Proposition \ref{proposition:part_two} and Proposition \ref{proposition:very_ampleness}.
\end{proof}

\section{Generalizations of the main theorem}
In this section we present a technical generalisation of Theorem \ref{theorem:main-mini}.

\begin{theorem} \label{theorem:main} Let $X$ be an $F$-pure $\mbQ$-factorial projective surface defined over an algebraically closed field of characteristic $p>0$. Assume that $mK_X$ is Cartier for some $m \in \mbN$. Let $L$ be an ample Cartier divisor on $X$ and let $N$ be any nef Cartier divisor. The following holds.
\begin{itemize} 
\item If $X$ is neither of general type nor quasi-elliptic with $\kappa(X)=1$, then 
\begin{itemize}
	\item[] $2mK_X+8mL+N$ is base point free, and
	\item[] $7mK_X+27mL+N$ is very ample.
\end{itemize}
\item If $p=3$ and $X$ is quasi-elliptic with $\kappa(X)=1$, then
\begin{itemize}
	\item[] $2mK_X+12mL+N$ is base point free, and
	\item[] $7mK_X+39mL+N$ is very ample.
\end{itemize}
\item If $p=2$ and $X$ is quasi-elliptic with $\kappa(X)=1$, then
\begin{itemize}
	\item[] $2mK_X+16mL+N$ is base point free, and
	\item[] $7mK_X+51mL+N$ is very ample.
\end{itemize}
\item If $p \geq 3$ and $X$ is of general type, then
\begin{itemize}
	\item[] $4mK_X+14mL+N$ is base point free, and
	\item[] $13mK_X+45mL+N$ is very ample.
\end{itemize} 
\item  If $p =2 $ and $X$ is of general type, then
\begin{itemize}
	\item[] $4mK_X+22mL+N$ is base point free, and
	\item[] $13mK_X+69mL+N$ is very ample.
\end{itemize} 
\end{itemize}
\end{theorem}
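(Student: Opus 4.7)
The plan is to mirror the three-step proof of Theorem \ref{theorem:main-mini}, now case by case according to the classification in Proposition \ref{proposition:andrea_general}. Since $X$ is already assumed to be $F$-pure and $\mbQ$-factorial, Theorem \ref{theorem:2dimp>5} is no longer needed as an input. For each of the five cases I will pick integers $(a,b)$ — respectively $(1,4)$, $(1,6)$, $(1,8)$, $(2,7)$, and $(2,11)$ — and carry out the three steps from the proof of the main theorem.

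First, in parallel with Proposition \ref{proposition:part_one}, I will show that $\mathrm{Bs}(m(aK_X + bL) + N) \subseteq \Sing(X)$ for every nef Cartier divisor $N$. Passing to the minimal resolution $\pi\colon \ovX \to X$ with exceptional locus $E$, the assertion becomes $\mathrm{Bs}(K_{\ovX} + D) \subseteq E$ for
\[
D = (am{-}1)(K_{\ovX} + 3\pi^*L) + (bm{-}3(am{-}1))\pi^*L + \pi^*N.
\]
In each of the five choices the residual $\pi^*L$-coefficient $bm-3(am-1)$ is strictly positive, so $D$ is nef and big. Applying Proposition \ref{proposition:andrea_general} in the relevant case: if $q\in \ovX \setminus E$ were a base point, there would be a non-exceptional curve $C$ through $q$ with $D \cdot C$ at most the Reider threshold ($1$, $5$, $7$, $1$, $7$ respectively), while non-exceptionality of $C$ forces $D \cdot C \geq bm - 3(am-1)$, which evaluates to $4, 6, 8, 4, 8$ at $m=1$ and only grows with $m$, a contradiction.

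Next, the cone-theoretic decomposition $aK_X + bL = a(K_X + 3L) + (b-3a)L$ with $b-3a>0$ makes $aK_X + bL$ ample in every case, and hence $L' \defeq (am-1)K_X + bmL$ is ample $\mbQ$-Cartier with $K_X + L' = m(aK_X + bL)$ ample Cartier. Proposition \ref{proposition:part_two} then delivers that $2m(aK_X + bL) + N$ is base point free, matching the base-point-free bound in each case. For the very ample conclusion, I put $L_{\mathrm{bpf}} \defeq 2m(aK_X + bL)$ (globally generated and ample by the previous step) and $D_* \defeq (m-1)K_X + 3mL$, which is ample with $K_X + D_*=m(K_X + 3L)$ Cartier. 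Proposition \ref{proposition:very_ampleness} then yields that
\[
K_X + 3 L_{\mathrm{bpf}} + D_* = m(6a+1)K_X + 3m(2b+1)L
\]
is very ample, and substituting the five pairs $(a,b)$ recovers exactly the bounds claimed.

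The one nonroutine verification is the volume hypothesis of Proposition \ref{proposition:andrea_general} in the two general-type cases, where one needs $\vol(D) > \vol(K_{\ovX}) + 4$ or $\vol(K_{\ovX}) + 6$. I plan to apply log-concavity of volume (Theorem \ref{theorem:volume}) to the splitting $D = (am-1)K_{\ovX} + (bm\pi^*L + \pi^*N)$, both summands being big when $a=2$ and $X$ is of general type; this yields $\vol(D) \geq (am-1)^2 \vol(K_{\ovX}) + b^2m^2$, which comfortably exceeds the required threshold for $b = 7$ and $b=11$. All remaining bookkeeping — nefness and bigness of $D$, ampleness of $aK_X + bL$, and $\pi^*L\cdot C\geq 1$ for non-exceptional $C$ — reduces to the decomposition above together with Lemma \ref{lemma:cone_corollary}.
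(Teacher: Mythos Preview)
Your approach is correct and essentially identical to the paper's: the same $(a,b)$ pairs, the same three-step pipeline through Proposition~\ref{proposition:andrea_general} (packaged in the paper as Proposition~\ref{proposition:part_one_upgraded}), Proposition~\ref{proposition:part_two}, and Proposition~\ref{proposition:very_ampleness}. Your direct handling of all $m$ at once in Step~1, rather than first proving the $m=1$ case and then bootstrapping via a nef $M$, is a harmless streamlining.

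One small omission: in the very-ample step you dropped the nef term $N$. The theorem asserts that $m(6a+1)K_X + 3m(2b+1)L + N$ is very ample, not just the $N=0$ case. The fix is to absorb $N$ into $D_*$, i.e.\ take $D_* \defeq (m-1)K_X + 3mL + N$; then $K_X + D_* = m(K_X+3L)+N$ is still Cartier, $D_* = (m-1)(K_X+3L) + 3L + N$ is still ample, and Proposition~\ref{proposition:very_ampleness} gives exactly the claimed divisor.
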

The bounds are rough. The theorem is a direct consequence of the following proposition.

\begin{proposition} \label{proposition:part_one_upgraded} Let $X$ be a normal projective surface defined over an algebraically closed field of characteristic $p>0$. Assume that $mK_X$ is Cartier for some $m\in \mbN$. Let $A$ be an ample Cartier divisor on $X$. Then 
\[
\mathrm{Bs}(m(aK_{X} + bA)+N) \subseteq \Sing(X),
\]
for any nef Cartier divisor $N$, where
\begin{itemize}
	\item $a=1, b=4$, if $X$ is neither of general type nor quasi-elliptic with $\kappa(X)=1$,
	\item $a=1, b=6$, if $X$ is quasi-elliptic with $\kappa(X)=1$ and $p=3$,
	\item $a=1, b=8$, if $X$ is quasi-elliptic with $\kappa(X)=1$ and $p=2$,
	\item $a=2, b=7$, if $X$ is of general type and $p\geq 3$,
	\item $a=2, b=11$, if $X$ is of general type and $p =2$.
\end{itemize}
\end{proposition}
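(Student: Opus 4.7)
The plan is to mimic the two-step argument used for Proposition \ref{proposition:part_one}, feeding the sharper case-by-case bounds of Proposition \ref{proposition:andrea_general} into the same machine. Passing to the minimal resolution $\pi\colon\overline{X}\to X$ with exceptional locus $E$, I would first prove
\[
\mathrm{Bs}(aK_{\overline{X}} + b\pi^*A + M) \subseteq E
\]
for every nef Cartier divisor $M$ on $\overline{X}$. The $m$-twisted statement on $X$ then drops out by setting $M = (m-1)(aK_{\overline{X}}+b\pi^*A) + \pi^*N$ and pushing forward, using that $aK_{\overline{X}}+b\pi^*A$ is nef by Lemma \ref{lemma:cone_corollary} (in each of the five cases the ratio $b/a$ is at least $3$).

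To prove the displayed inclusion I would argue by contradiction: assume $q\in\overline{X}\setminus E$ is a base point and set $D \defeq (a-1)K_{\overline{X}} + b\pi^*A + M$, so $q$ is a base point of $K_{\overline{X}}+D$. I would then apply Proposition \ref{proposition:andrea_general} on $\overline{X}$, noting that general type and quasi-ellipticity with $\kappa=1$ are birational invariants, so $\overline{X}$ falls into the same case as $X$. Checking the nef-and-big hypothesis on $D$ is routine in each subcase: for $a=1$ it is immediate since $b\geq 4$; for $a=2$ one uses the decomposition $D = (K_{\overline{X}}+3\pi^*A+M) + (b-3)\pi^*A$, with the first summand nef by Lemma \ref{lemma:cone_corollary}. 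For the required volume lower bound, when $a=1$ one simply has $D^2 \geq b^2 A^2 \geq b^2 \in \{16,36,64\}$; when $a=2$, log-concavity of volume (Theorem \ref{theorem:volume}) applied to the splitting $D = K_{\overline{X}} + (b\pi^*A + M)$, both summands big because $X$ is of general type, yields $\vol(D) \geq \vol(K_{\overline{X}}) + b^2$, i.e.\ $\vol(K_{\overline{X}})+49$ or $\vol(K_{\overline{X}})+121$, comfortably above the respective thresholds $\vol(K_{\overline{X}})+4$ and $\vol(K_{\overline{X}})+6$.

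Proposition \ref{proposition:andrea_general} then produces an integral curve $C$ through $q$ with $D\cdot C \leq k$, for $k \in \{1,5,7,1,7\}$ across the five cases. The contradiction I would extract comes from the fact that $q \notin E$ forces $C\not\subseteq E$, so $\pi_*C$ is a non-zero curve on $X$ and $\pi^*A\cdot C = A\cdot\pi_*C \geq 1$ by ampleness of the Cartier divisor $A$. For $a=1$ this gives $D\cdot C \geq b\pi^*A\cdot C \geq b \in \{4,6,8\}$, strictly exceeding the corresponding $k\in\{1,5,7\}$. For $a=2$ the decomposition above yields $D\cdot C \geq (b-3)\pi^*A\cdot C \geq b-3 \in \{4,8\}$, again strictly exceeding $k\in\{1,7\}$.

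The main obstacle is pure bookkeeping: one must verify, case by case, both that the Reider upper bound on $D\cdot C$ is strictly beaten by the pushforward lower bound, and that the relevant volume threshold in Proposition \ref{proposition:andrea_general} is cleared by log-concavity. No new geometric input beyond the case distinctions already packaged into Proposition \ref{proposition:andrea_general} should be required.
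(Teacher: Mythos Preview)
Your proposal is correct and follows exactly the approach the paper intends: the paper's proof of Proposition \ref{proposition:part_one_upgraded} is a one-line reference to Proposition \ref{proposition:andrea_general} together with the proof of Proposition \ref{proposition:part_one}, and you have faithfully unpacked that reference, including the birational invariance of the case distinction and the case-by-case numerical checks.
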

\begin{proof}
This follows from Proposition \ref{proposition:andrea_general}, by exactly the same proof as of Proposition \ref{proposition:part_one}. 
\end{proof}

\begin{proof}[Proof of Theorem \ref{theorem:main}]
This follows directly from Theorem \ref{theorem:2dimp>5}, Proposition \ref{proposition:part_one_upgraded}, Proposition \ref{proposition:part_two} and Proposition \ref{proposition:very_ampleness}.

\end{proof}

\section{Matsusaka-type bounds}
The goal of this section is to prove Corollary \ref{corollary:Matsusaka-mini}. The key part of the proof is the following proposition.
\begin{proposition} Let $A$ be an ample Cartier divisor and let $N$ be a nef Cartier divisor on a normal projective surface $X$.  Then $kA-N$ is nef for any
\[
k \geq \frac{2A \cdot N}{A^2}((K_X + 3A) \cdot A +1) + 1.
\]
\end{proposition}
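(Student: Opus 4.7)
I will argue by contradiction. Suppose $D := kA - N$ is not nef, so there exists an irreducible curve $C \subset X$ with $D \cdot C < 0$, i.e.\ $N \cdot C > k d$, where $d := A \cdot C \geq 1$ (since $A$ is an ample Cartier divisor). The plan is to upper bound the ratio $N \cdot C / d$ strongly enough to contradict this.

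The engine is the Cauchy--Schwarz form of the Hodge Index Theorem: for any two divisors $X_0, Y_0$ on the surface and the ample class $A$,
\[
A^2 (X_0 \cdot Y_0) \;\leq\; (A \cdot X_0)(A \cdot Y_0) + \sqrt{\bigl((A \cdot X_0)^2 - A^2 X_0^2\bigr)\bigl((A \cdot Y_0)^2 - A^2 Y_0^2\bigr)}.
\]
This follows by writing $X_0$ and $Y_0$ as a multiple of $A$ plus an element of $A^{\perp}$ and applying ordinary Cauchy--Schwarz to the negative definite form on $A^{\perp}$. Applied to $(X_0, Y_0) = (N, C)$, after dropping the non-negative term $A^2 N^2$ (nefness of $N$), it yields
\[
N \cdot C \;\leq\; \frac{A \cdot N}{A^2}\left(d + \sqrt{d^2 - A^2 C^2}\right).
\]
If $C^2 \geq 0$ the square root is at most $d$, so $N \cdot C / d \leq 2(A \cdot N)/A^2 \leq k$, a contradiction.

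So we may assume $C^2 < 0$; set $\delta := -C^2 > 0$, $\beta := (K_X + 3A) \cdot A \geq 0$, and $w := \sqrt{d^2 + A^2 \delta}$. Applying the same Hodge--Cauchy--Schwarz inequality to $(X_0, Y_0) = (K_X + 3A,\, C)$ (and discarding the non-negative $A^2 (K_X+3A)^2$, since $K_X+3A$ is nef) gives $(K_X + 3A) \cdot C \leq \beta(d + w)/A^2$, hence $K_X \cdot C \leq \beta(d + w)/A^2 - 3d$. On the other hand, adjunction for the irreducible curve $C$ yields $p_a(C) \geq 0$ and thus $K_X \cdot C + C^2 \geq -2$, i.e.\ $K_X \cdot C \geq \delta - 2$. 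Combining the two estimates, and substituting $A^2 \delta = w^2 - d^2$, produces the quadratic inequality
\[
w^2 - \beta w \;\leq\; d^2 + \beta d - A^2(3d - 2).
\]
Since $d \geq 1$ and $A^2 \geq 1$, completing the square on the left and noting that the right-hand side is at most $(d + \beta/2)^2 - \beta^2/4$ gives the clean bound $w \leq d + \beta$. Substituting back,
\[
\frac{N \cdot C}{d} \;\leq\; \frac{A \cdot N}{A^2}\!\left(1 + \frac{w}{d}\right) \;\leq\; \frac{A \cdot N}{A^2}(\beta + 2) \;<\; \frac{2(A \cdot N)}{A^2}(\beta + 1) + 1 \;\leq\; k,
\]
where the strict inequality in the middle follows because its difference equals $(A \cdot N)\beta / A^2 + 1 > 0$. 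This is the required contradiction.

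The heart of the argument --- and the only place where adjunction and the nefness of $K_X + 3A$ enter non-trivially --- is bounding $w$ in the case $C^2 < 0$. The factor $(K_X + 3A) \cdot A + 1$ and the additive $+1$ in the hypothesis on $k$ are exactly the slack introduced by (i) discarding $N^2$ and $(K_X + 3A)^2$ in the Hodge--Cauchy--Schwarz step and (ii) the quadratic analysis giving $w \leq d + \beta$.
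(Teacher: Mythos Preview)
Your proof is correct and is essentially the same argument the paper intends: the paper simply cites \cite[Theorem 3.3]{DiCerboFanelli}, whose proof (following Fern\'andez del Busto) is precisely the Hodge-index-plus-adjunction computation you carry out, with the sole modification that on a normal (possibly singular) surface one uses the nefness of $K_X+3A$ rather than $K_X+2A$. Your write-up makes the mechanics explicit---Cauchy--Schwarz on $A^{\perp}$ applied once to $(N,C)$ and once to $(K_X+3A,C)$, then the quadratic bound $w\leq d+\beta$ from adjunction---which is exactly the content behind the citation.
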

\begin{proof}
The proof is exactly the same as \cite[Theorem 3.3]{DiCerboFanelli}. The only difference is that for singular surfaces, the cone theorem is weaker, so we have $K_X+3D$ in the statement, instead of $K_X+2D$.
\end{proof}

The following proof is exactly the same as of \cite[Theorem 1.2]{DiCerboFanelli}.
\begin{proof}[Proof of Proposition \ref{corollary:Matsusaka-mini}]
By Theorem \ref{theorem:main-mini}, we know that $H$ is very ample. By the above proposition, we know that $kA - (H+N)$ is a nef Cartier divisor. Thus, by the proof of Theorem \ref{theorem:main-mini} 
\[
H + \underbrace{(kA - (H+N))}_{\text{nef}} = kA - N
\]
is very ample.
\end{proof}

Applying Theorem \ref{theorem:main}, we obtain the following.
\begin{corollary} \label{corollary:Matsusaka}Let $A$ and $N$ be respectively an ample and a nef Cartier divisor on an $F$-pure $\mbQ$-factorial projective surface defined over an algebraically closed field of characteristic $p>0$. Let $m\in \mbN$ be such that $mK_X$ is Cartier. Then $kA - N$ is very ample for any
\[
k > \frac{2A \cdot (H+N)}{A^2}((K_X +3A)
\cdot A + 1),
\]
where 
\begin{itemize}
\item $H \defeq 7mK_X + 27mA$, if $X$ is neither quasi-elliptic with $\kappa(X)=1$, nor of general type,
\item $H \defeq 7mK_X+39mA$, if $X$ is quasi-elliptic with $\kappa(X)=1$ and $p=3$,
\item $H \defeq 7mK_X+51mA$, if $X$ is quasi-elliptic with $\kappa(X)=1$ and $p=2$,
\item $H \defeq 13mK_X+45mA$, if $X$ is of general type and $p \geq 3$,
\item $H \defeq 13mK_X+69mA$, if $X$ is of general type and $p = 2$.
\end{itemize}
\end{corollary}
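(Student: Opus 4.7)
The plan is to mimic the proof of Corollary \ref{corollary:Matsusaka-mini} verbatim, with the only change being that Theorem \ref{theorem:main-mini} is replaced by its case-by-case generalization Theorem \ref{theorem:main}. There are no new ideas to introduce; the whole point of isolating the preceding nefness proposition is that it does not use anything about $F$-purity or the type of the surface, so it applies unchanged in this more general setting.

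First I would fix the surface $X$ and select the appropriate very ample divisor $H$ according to which of the five cases of Theorem \ref{theorem:main} the surface falls into. The crucial feature of Theorem \ref{theorem:main} that I will use is not just the very ampleness of $H$ itself, but rather the strengthened form built into its statement: for any nef Cartier divisor $N'$ on $X$, the divisor $H + N'$ is again very ample. (This is exactly why Proposition \ref{proposition:very_ampleness} and Proposition \ref{proposition:part_two} were formulated to allow an arbitrary additional nef Cartier summand.)

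Next I would apply the proposition proved at the start of this section to the ample Cartier divisor $A$ and the nef Cartier divisor $H + N$, noting that $N$ nef and $H$ very ample implies $H+N$ is nef. The numerical hypothesis
\[
k > \frac{2A\cdot (H+N)}{A^2}\bigl((K_X + 3A)\cdot A + 1\bigr)
\]
is exactly what is needed to conclude that $kA - (H+N)$ is a nef Cartier divisor on $X$.

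Finally I would combine the two steps via the identity
\[
kA - N \;=\; H \;+\; \bigl(kA - (H+N)\bigr),
\]
which realises $kA - N$ as the sum of $H$ and a nef Cartier divisor. By the strengthened form of Theorem \ref{theorem:main} recalled in the first step, such a sum is very ample, completing the proof. There is no real obstacle here; the only thing to be slightly careful about is to record that the five cases of the definition of $H$ correspond to the five cases of Theorem \ref{theorem:main}, so that in each case the very ampleness of $H + (\text{nef})$ is genuinely supplied by that theorem rather than being assumed.
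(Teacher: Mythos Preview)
Your proposal is correct and follows essentially the same approach as the paper: apply the nefness proposition to $A$ and $H+N$ to make $kA-(H+N)$ nef, then use Theorem \ref{theorem:main} (in place of Theorem \ref{theorem:main-mini}) to conclude that $H + \bigl(kA-(H+N)\bigr) = kA-N$ is very ample. Your explicit observation that Theorem \ref{theorem:main} is stated with an arbitrary nef Cartier summand is exactly what the paper invokes when it says ``by the proof of Theorem \ref{theorem:main-mini}''.
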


\section{Bounds on log del Pezzo pairs}
The goal of this section is to prove Corollary \ref{corollary:boundedness_of_log_del_Pezzo}. We need the following facts.
\begin{proposition} \label{proposition:log_del_pezzo_first_bounds} Let $(X,\Delta)$ be an $\epsilon$-klt log del Pezzo pair for $0< \epsilon < 3^{-1/2}$. Let $\pi \colon \ovX \to X$ be the minimal resolution. Then
\begin{itemize}
	\item[(a)]  $0 \leq (K_X + \Delta)^2 \leq \max\Big(9, \lfloor 2/\epsilon \rfloor + 4 + \frac{4}{\lfloor 2/\epsilon\rfloor}\Big)$,
	\item[(b)]  $\mathrm{rk} \Pic(\ovX) \leq 128(1/\epsilon)^5$,
	\item[(c)] $2 \leq -E^2 \leq 2 / \epsilon$ for any exceptional curve $E$ of $\pi \colon \ovX \to X$,
	\item[(d)] if $m$ is the $\mbQ$-factorial index at some point $x \in X$, then
	\[
		m \leq 2(2/\epsilon)^{128/\epsilon^5}.
	\]
\end{itemize}
\end{proposition}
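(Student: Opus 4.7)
The plan is to prove parts (c), (a), (b), (d) in that order, since the local information in (c) feeds into (b) and (d). For (c), the lower bound $-E^2 \geq 2$ follows because a $(-1)$-curve cannot appear in the exceptional locus of a minimal resolution, and every exceptional curve is a smooth rational curve by the classification of klt surface singularities. For the upper bound, I would write $\pi^*(K_X + \Delta) = K_{\ovX} + \pi_*^{-1}\Delta + \sum_i a_i E_i$ with $a_i \in [0, 1-\epsilon)$: nonnegativity is the standard statement for minimal resolutions of klt surface pairs, and $a_i < 1-\epsilon$ is the $\epsilon$-klt hypothesis rephrased. Intersecting with $E = E_k$ and using adjunction $K_{\ovX}\cdot E + E^2 = -2$ together with nonnegativity of $\pi_*^{-1}\Delta \cdot E$ and of $a_i E_i \cdot E$ for $i \neq k$ gives $-E^2(1-a_k) \leq 2$, hence $-E^2 \leq 2/(1-a_k) < 2/\epsilon$.

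For (a), the lower bound is immediate from ampleness of $-(K_X+\Delta)$. For the upper bound, I would pull back $-(K_X+\Delta)$ to $\ovX$, run a $K_{\ovX}$-MMP, and analyse the output: if it is $\mbP^2$ then a direct computation gives the classical bound $(K_X+\Delta)^2 \leq 9$; if it is a Hirzebruch surface $\mbF_n$, then applying (c) to the proper transform of the negative section forces $n \leq \lfloor 2/\epsilon \rfloor$, and computing the self-intersection of the pushforward of $-\pi^*(K_X+\Delta)$ on $\mbF_n$ in the basis given by a fiber and the negative section yields the bound $n + 4 + 4/n$.

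For (b), decompose $\rho(\ovX) = \rho(X) + \#\{\pi\text{-exceptional curves}\}$. The Picard rank of a $\mbQ$-factorial log del Pezzo is controlled by its anticanonical volume via the cone theorem, and both the number of singular points and the length of the exceptional chain at each point are bounded explicitly by $\epsilon$ via (a) and (c). Combining these estimates following Alexeev's argument gives the conservative constant $128/\epsilon^5$.

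For (d), the $\mbQ$-factorial index at $x$ divides $|\det(E_i\cdot E_j)|$, where $E_1,\dots,E_k$ are the exceptional curves over $x$. By (c) each diagonal entry is at most $2/\epsilon$ in absolute value and each off-diagonal entry lies in $\{0,1\}$; by (b) we have $k \leq 128/\epsilon^5$. Hadamard's inequality then yields $m \leq 2(2/\epsilon)^{128/\epsilon^5}$. The main obstacle is the upper bound in (a): obtaining the explicit constant, rather than a mere existence bound, demands the MMP case analysis above and careful control of volume on the smooth model, whereas (b)--(d) are essentially combinatorial bookkeeping given (a) and (c).
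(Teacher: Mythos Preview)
The paper does not prove this proposition at all: it simply cites Jiang for (a), Alexeev--Mori for (b) and (c), and the standard fact that the local Cartier index divides the determinant of the exceptional intersection matrix (as in Lai) for (d). Your proposal is therefore not a comparison against the paper's argument but a sketch of the arguments in those references, and for (c), (b), and (d) your outline is essentially the one found there.

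There is one genuine slip in your treatment of (a). You write ``applying (c) to the proper transform of the negative section forces $n \leq \lfloor 2/\epsilon \rfloor$''. But (c) concerns $\pi$-exceptional curves of $\ovX \to X$, and the negative section $C_0 \subset \mbF_n$ (or its strict transform on $\ovX$) has no reason to be contracted by $\pi$; it may well dominate a curve on $X$. The correct mechanism is different: the crepant pullback $(\ovX,\Gamma)$ of $(X,\Delta)$ is $\epsilon$-klt, and running the $K_{\ovX}$-MMP keeps the pair $\epsilon$-klt, so on $\mbF_n$ the coefficient $c$ of $C_0$ in the pushforward boundary $\Gamma'$ satisfies $c < 1-\epsilon$. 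Now nefness of $-(K_{\mbF_n}+\Gamma')$ intersected with $C_0$ gives $c \geq 1 - 2/n$, whence $n \leq \lfloor 2/\epsilon \rfloor$. After that, maximising $(K_{\mbF_n}+\Gamma')^2$ over admissible $\Gamma'$ yields the bound $n+4+4/n$; this optimisation step is where Jiang's argument does real work and is not just ``computing the self-intersection of the pushforward''. With that correction your plan for (a) is the right one.
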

\begin{proof}
Point (a) follows from \cite[Theorem 1.3]{Jiang}. Points (b) and (c) follow from \cite[Corollary 1.10]{MoriAlexeev} and \cite[Lemma 1.2]{MoriAlexeev}, respectively. Last, (d) follows from the fact that the Cartier index of a divisor divides the determinant of the intersection matrix of the minimal resolution of a singularity (see also the paragraph below \cite[Theorem A]{Lai}).
\end{proof}

Further, we need to prove the following:
\begin{lemma} \label{lemma:log_del_pezzo_second_bounds} Let $(X,\Delta)$ be a klt log del Pezzo pair such that $m(K_X+\Delta)$ is Cartier for some natural number $m \geq 2$. Then
\begin{enumerate}
	\item  $0 \leq (K_X + \Delta) \cdot K_X \leq  3m\max\big(9, 2m  + 4 + \frac{2}{m}\big)$, and 
	\item $|K_X^2| \leq 128m^5(2m-1)$.
\end{enumerate}
\end{lemma}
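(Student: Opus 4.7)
The plan is to establish (1) via an elementary cone-theoretic computation and (2) via an analysis on the minimal resolution combined with Noether's formula.

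For (1), I would decompose $K_X = (K_X+\Delta) - \Delta$ to get
\[
(K_X+\Delta) \cdot K_X = (K_X+\Delta)^2 - (K_X+\Delta) \cdot \Delta.
\]
The first term is non-negative since $-(K_X+\Delta)$ is ample, and the second is non-positive since $\Delta$ is effective and $-(K_X+\Delta)$ is ample; this gives the lower bound. For the upper bound, set $A \defeq -(K_X+\Delta)$, so $mA$ is ample Cartier, and apply the cone-theoretic fact recalled in the preliminaries to conclude $K_X + 3(mA)$ is nef. Intersecting with $A$ and rearranging yields $(K_X+\Delta) \cdot K_X \leq 3m(K_X+\Delta)^2$. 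Since $(X,\Delta)$ klt with Cartier index $m$ is automatically $1/m$-klt, Proposition \ref{proposition:log_del_pezzo_first_bounds}(a) applied with $\epsilon = 1/m$ bounds $(K_X+\Delta)^2$ by $\max(9, 2m+4+2/m)$, finishing (1).

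For (2), I would pass to the minimal resolution $\pi\colon \ovX \to X$ and write $K_{\ovX} = \pi^*K_X - F$, with $F = \sum c_i E_i$ an effective $\mbQ$-divisor supported on the exceptional locus and $0 \leq c_i < 1$ by the klt hypothesis. The projection formula (using $\pi^*K_X \cdot E_i = 0$) gives
\[
K_X^2 = K_{\ovX}^2 - F^2.
\]
I would bound the two terms separately. The surface $\ovX$ is rational with $\chi(\mcO_{\ovX}) = 1$ (using strong $F$-regularity of $X$ in characteristic $p>5$, hence rational singularities, together with log-Fano vanishing), so Noether's formula yields $K_{\ovX}^2 = 10 - \rho(\ovX)$, which by Proposition \ref{proposition:log_del_pezzo_first_bounds}(b) satisfies $|K_{\ovX}^2| \leq 128m^5$. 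For $F^2$, the identity $F \cdot E_i = -K_{\ovX} \cdot E_i$ yields
\[
-F^2 = \sum_i c_i\,(K_{\ovX} \cdot E_i);
\]
each $E_i$ is smooth rational, so by adjunction $K_{\ovX} \cdot E_i = -2 - E_i^2 \leq 2m - 2$ using Proposition \ref{proposition:log_del_pezzo_first_bounds}(c), and the number of exceptional components is at most $\rho(\ovX) \leq 128m^5$. Combined with $c_i < 1$, this gives $|F^2| \leq 128m^5(2m-2)$, so
\[
|K_X^2| \leq 128m^5 + 128m^5(2m-2) = 128m^5(2m-1).
\]

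The main obstacle will be justifying the geometric input on $\ovX$: rationality (with $\chi(\mcO_{\ovX})=1$) and the smoothness and rationality of the exceptional curves of the minimal resolution. In characteristic zero both are classical; in characteristic $p>5$ they should follow from the $F$-regularity of klt surface singularities (Theorem \ref{theorem:2dimp>5}) together with the machinery underlying Proposition \ref{proposition:log_del_pezzo_first_bounds}, but careful tracking of the references used by Alexeev--Mori will be needed to ensure no further characteristic hypothesis sneaks in.
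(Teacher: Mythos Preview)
Your argument is correct and matches the paper's almost exactly: part~(1) is identical, and for~(2) the paper computes $K_X^2 = \pi^*K_X \cdot K_{\ovX} = K_{\ovX}^2 + \sum a_i\,(E_i\cdot K_{\ovX})$ and bounds $|K_{\ovX}^2|$ by counting blow-downs to a minimal rational surface, which is algebraically equivalent to your decomposition $K_X^2 = K_{\ovX}^2 - F^2$ (via $K_{\ovX}\cdot F = -F^2$) together with the formula $K_{\ovX}^2 = 10 - \rho(\ovX)$. Regarding the obstacle you flag, you do not need Theorem~\ref{theorem:2dimp>5} or any restriction to $p>5$: klt surface singularities are rational in every characteristic (as recalled in the preliminaries), and the paper simply treats the rationality of $\ovX$ and the fact that the $E_i$ are smooth rational curves as part of the Alexeev--Mori input already underlying Proposition~\ref{proposition:log_del_pezzo_first_bounds}.
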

Recall that if a log del Pezzo pair $(X,\Delta)$, with Cartier index $m$, is klt, then it must be $1/m$-klt.
\begin{proof}
 The non-negativity in  $(1)$ is clear, since 
\[
\big(K_X + \Delta\big) \cdot K_X = \big(K_X+\Delta\big)^2 - \big(K_X+\Delta\big)\cdot \Delta \geq 0.
\]
Further, by cone theorem, $K_{X} - 3m\big(K_{X} + \Delta\big)$ is nef, and thus
\[
(K_{X} + \Delta) \cdot \big(K_X - 3m(K_{X} + \Delta))  \leq 0.
\]
This, together with (a) in Proposition \ref{proposition:log_del_pezzo_first_bounds}, implies $(1)$.

To prove $(2)$, we proceed as follows. Let $\pi \colon \ovX \to X$ be the minimal resolution of singularities of $X$. By (b) in Proposition \ref{proposition:log_del_pezzo_first_bounds}, we have $\mathrm{rk} \Pic(\ovX) \leq 128m^5$, and so  $-9 \leq -K_{\ovX}^2  \leq 128m^5$. Indeed, the self intesection of the canonical bundle on a minimal model of a rational surface is $8$ or $9$, and each blow-up decreases it by one.

Write 
\[
K_{\ovX} + \sum a_i E_i = \pi^*K_X,
\]
where $E_i$ are the exceptional divisors of $\pi$. Notice, that since $\ovX \to X$ is minimal and $X$ is klt, we have $0 \leq a_i < 1$. By applying (b) and (c) from Proposition \ref{proposition:log_del_pezzo_first_bounds}, we obtain
\begin{align*}
|K_X^2| &= \big|\big(K_{\ovX} + \sum a_i E_i\big) \cdot K_{\ovX} \big|\\ &\leq \big|K_{\ovX}\big|^2 + 128m^5(2m-2)\\ &\leq 128m^5(2m-1).
\end{align*}
\end{proof}

\begin{proof}[Proof of Corollary \ref{corollary:boundedness_of_log_del_Pezzo}]
By Proposition \ref{proposition:log_del_pezzo_first_bounds}, the $\mathbb{Q}$-factorial index of $X$ is bounded with respect to $\epsilon$. Indeed, the $\mathbb{Q}$-factorial index is bounded at each point by (d), and the number of singular points is bounded by (b). Hence, we can assume that there exists $m\in \mbN$ bounded with respect to $\epsilon$ and $I$, such that $mK_X$, $m\Delta$, and $m(K_X+\Delta)$ are Cartier. 

Set $a = 13m$ and $b=45m^2$. By Theorem \ref{theorem:main-mini} and  Remark \ref{remark:effective_vanishing}, the divisor $H \defeq aK_X - b(K_X + \Delta)$ is very ample, and $H^i(X, H)=0$ for $i>0$. Proposition \ref{proposition:log_del_pezzo_first_bounds}(a) and Lemma \ref{lemma:log_del_pezzo_second_bounds} imply that $H^2$, $|H \cdot K_X|$, $H \cdot \Delta$, $|K_X \cdot \Delta|$, and $|\Delta^2|$  are bounded with respect to $m$.

The ample divisor $H$ embeds $X$ into a projective space of dimension $\chi(\mcO_X(H)) = H^0(X,\mcO_X(H))$, which is bounded with respect to $m$ by the Riemann-Roch theorem.

\end{proof}
If $mK_X$ and $m(K_X+\Delta)$ are Cartier for $m>1$, then one can easily calculate, that is enough to take
\[
b(\epsilon, I) = \Big(a^2+b^2\Big)\Big(128m^5(2m-1) + \max\Big(9,2m + 4 + \frac{2}{m}\Big)\Big),
\]
where $a=13m$ and $b=45m^2$.

\begin{remark}Corollary \ref{corollary:boundedness_of_log_del_Pezzo} and the Riemann-Roch theorem for surfaces with rational singularities imply that the absolute values of the coefficients of the Hilbert polynomial of $X$ with respect to $H$ are bounded with respect to $\epsilon$ and $I$. Further, let $n \in \mbN$ be such that $n\Delta$ is Cartier. Then the absolute values of the coefficients of the Hilbert polynomial of $n\Delta$ with respect to $H|_{n\Delta}$ are bounded with respect to $\epsilon$, $I$, and $n$. Indeed,
\[
\chi(\mcO_{n\Delta}(mH)) = mn\Delta \cdot H - \frac{1}{2}n\Delta \cdot (n\Delta + K_X)
\]
for $m \in \mbZ$, by the Riemann-Roch theorem and the adjunction formula.

\end{remark}

\begin{remark} One of the reasons for our interest in the above corollary is that it provides bounds on $\epsilon$-klt log del Pezzo surfaces which are independent of the characteristic. In particular, it shows the existence of a bounded family of $\epsilon$-klt log del Pezzo surfaces over $\Spec \mbZ$ (see \cite[Lemma 3.1]{CTW15b}). We were not able to verify whether Koll\'{a}r's bounds depend on the characteristic or not. We believe that stating explicit bounds might ease the life of future researchers, wanting to handle questions related to the behavior of log del Pezzo surfaces in mix characteristic or for big enough characteristic.
\end{remark}

\subsection{Effective vanishing of $H^1$}
The goal of this subsection is to give a proof of the following proposition.

\begin{proposition} \label{proposition:effective_vanishing} Let $X$ be a normal projective surface. Then
\[
H^i(X, \mathcal{O}_X(D)) = 0 \text{ for } i>0,
\]
where $D = 3K_X + 14A + N$ is Cartier for an ample Cartier divisor $A$ and a nef $\mbQ$-Cartier divisor $N$.
\end{proposition}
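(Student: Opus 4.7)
My plan is to split the vanishing by cohomological degree: $H^2$ is immediate from Serre duality, while $H^1$ is reduced to a vanishing on the minimal resolution via the Leray spectral sequence, where it is proved by a Reider-style argument.

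For $H^2$: since $X$ is normal projective, it is Cohen--Macaulay, and Serre duality gives $H^2(X, \mathcal{O}_X(D)) \cong H^0(X, \mathcal{O}_X(K_X - D))^{\vee}$. The divisor $D - K_X = 2K_X + 14A + N = 2(K_X + 3A) + 8A + N$ is a sum of a nef divisor (by the cone theorem), an ample divisor, and a nef divisor, hence ample. Therefore $K_X - D$ is anti-ample with vanishing space of global sections, so $H^2(X, \mathcal{O}_X(D)) = 0$.

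For $H^1$: let $\pi \colon \wtX \to X$ be the minimal resolution. By the projection formula, $R^i\pi_* \mathcal{O}_{\wtX}(\pi^*D) \cong \mathcal{O}_X(D) \otimes R^i\pi_*\mathcal{O}_{\wtX}$, and $R^1\pi_*\mathcal{O}_{\wtX}$ is supported on the finite set $\pi(\Exc(\pi))$, so $H^1$ of this tensor sheaf vanishes. The Leray five-term sequence yields an injection $H^1(X, \mathcal{O}_X(D)) \hookrightarrow H^1(\wtX, \mathcal{O}_{\wtX}(\pi^*D))$, reducing to showing $H^1(\wtX, \pi^*D) = 0$. By minimality of $\pi$ we write $\pi^*K_X = K_{\wtX} + \Delta$ with $\Delta \geq 0$ an effective $\mbQ$-divisor supported on $\Exc(\pi)$ (effectivity uses that $K_{\wtX} \cdot E \geq 0$ on every exceptional curve together with the negative-definiteness of the exceptional intersection matrix), so
\[
\pi^*D = K_{\wtX} + \Delta + M, \qquad M := 2\pi^*(K_X + 7A) + \pi^*N,
\]
where $M$ is nef and big since $K_X + 7A = (K_X + 3A) + 4A$ is ample.

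The main obstacle is that Kodaira vanishing fails in positive characteristic, so one cannot invoke Kawamata--Viehweg directly for $K_{\wtX} + \Delta + M$ on the smooth surface. Instead, I plan to mimic the Reider-style strategy of Propositions \ref{proposition:key_for_andrea} and \ref{proposition:andrea_general}: a nonzero class in $H^1(\wtX, \pi^*D)$ produces a non-split rank two extension with $c_1^2 - 4c_2 = (\pi^*D - K_{\wtX})^2 > 0$, and Bogomolov instability --- via Shepherd--Barron when $\wtX$ is neither of general type nor quasi-elliptic with $\kappa = 1$, and the Di Cerbo--Fanelli variant in the remaining cases --- yields a curve $C$ with $(\pi^*D - K_{\wtX}) \cdot C$ small, contradicting the strong positivity of $M + \Delta$. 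Verifying the Reider-type numerical hypothesis (roughly $(\pi^*D - K_{\wtX})^2 > \vol(K_{\wtX}) + $ constant) in every characteristic, with the worst case being $p = 2$ and $\wtX$ of general type, is the technical heart; the coefficients $3$ and $14$ in $D$ are chosen to leave sufficient room for this estimate after pulling back and absorbing the exceptional summand $\Delta$.
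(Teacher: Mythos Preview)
Your treatment of $H^2$ is correct and matches the paper's: both reduce via Serre duality to the ampleness of $L := D - K_X = 2K_X + 14A + N$. Your Leray reduction $H^1(X,\mcO_X(D)) \hookrightarrow H^1(\wtX,\mcO_{\wtX}(\pi^*D))$ is also valid.

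The gap is in the Reider step on $\wtX$. Write $\bar D := \pi^*D - K_{\wtX} = \pi^*L + \Delta$. Two things go wrong. First, $\bar D$ is \emph{not} positive on exceptional curves: since $\pi^*L \cdot E_i = 0$ and $(K_{\wtX}+\Delta)\cdot E_i = \pi^*K_X \cdot E_i = 0$, one gets $\bar D \cdot E_i = \Delta \cdot E_i = -K_{\wtX}\cdot E_i \le 0$. Thus the ``strong positivity of $M+\Delta$'' you appeal to fails exactly on $\Exc(\pi)$, and nothing prevents the Bogomolov/Reider curve $C$ from being exceptional, in which case there is no contradiction. The escape route in Proposition~\ref{proposition:key_for_andrea}---forcing $\bar E$ non-exceptional---requires a birational morphism to a \emph{smooth} target on which the pushforward of $\bar D$ is nef; you do not have such a target here. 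Second, and more seriously, the proposition is stated for an arbitrary normal surface, so the coefficients of $\Delta$ are unbounded. Then $\bar D^2 = L^2 + \Delta^2$ (note $\pi^*L\cdot\Delta = 0$) can be $\le 0$, so the Bogomolov inequality $c_1^2 > 4c_2$ may fail outright; even when it holds, the Di Cerbo--Fanelli hypothesis $\bar D^2 > \vol(K_{\wtX})$ is not guaranteed by the fixed coefficients $3$ and $14$.

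The paper sidesteps all of this by never resolving. It works directly on $X$ and applies Koll\'ar's bend-and-break theorem (Theorem~\ref{theorem:kollar_surfaces}): if $H^1(X,\mcO_X(-L))\neq 0$, then for a covering pencil $\{D_t\}$ one obtains a rational curve $C$ with
\[
L\cdot C \ \le\ 4\,\frac{L\cdot D_t}{((p-1)L - K_X)\cdot D_t}\ \le\ 4\,\frac{L\cdot D_t}{(L-K_X)\cdot D_t}.
\]
Since $L - K_X = K_X + 14A + N$ and $K_X + 3A$ is nef, one has $L \cdot D_t < 2(L-K_X)\cdot D_t$, giving $L\cdot C < 8$; but $L \cdot C \ge 8A\cdot C \ge 8$, a contradiction. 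This argument is uniform in the singularities of $X$ precisely because it never passes to $\wtX$.
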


It was pointed to us by the anonymous referee that the proposition follows from a result of Koll{\'a}r. We present this approach below.

First, let us recall the aforementioned result. 
\begin{theorem}[{\cite[Theorem II.6.2 and Remark II.6.7.2]{kollarcurves}}] \label{theorem:kollar_surfaces}
Let $X$ be a normal, projective variety defined over a field of characteristic $p$. Let $L$ be an ample $\mbQ$-Cartier  Weil divisor on $X$ satisfying $H^1(X, \mcO_X(-L)) \neq 0$. Assume that $X$ is covered by a family of curves $\{D_t\}$ such that $X$ is smooth along a general $D_t$ and 
\[
((p-1)L - K_X) \cdot D_t > 0.
\]
Then, through every point $x \in X$ there is a rational curve $C \subseteq X$ such that 
\[
L \cdot C \leq 2\dim X \frac{L \cdot D_t}{((p-1)L - K_X)\cdot D_t}. 
\]
\end{theorem}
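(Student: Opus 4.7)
The result is Koll\'ar's foundational theorem from \cite{kollarcurves}, obtained by combining Mori's bend-and-break with the Frobenius trick specific to positive characteristic; my plan is to follow that strategy. The first step is to translate the cohomological hypothesis into geometry: the nonvanishing $H^1(X,\mcO_X(-L))\neq 0$ is equivalent to the existence of a non-split extension
\[
0 \to \mcO_X(-L) \to E \to \mcO_X \to 0,
\]
so $E$ is a rank-$2$ coherent sheaf with $c_1(E)=-L$. Because $L$ is ample, $E$ is formally slope-unstable with respect to any polarization; the task is to promote this purely formal instability to a genuine geometric object, namely a rational curve through a prescribed point $x\in X$.

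The second step is the Frobenius twist, which is where positive characteristic is essential. Pulling the extension back along the $e$-th iterate of Frobenius $F^e\colon X\to X$ yields a new non-split extension
\[
0 \to \mcO_X(-p^eL) \to (F^e)^*E \to \mcO_X \to 0,
\]
so the destabilising sub-line-bundle has its degree amplified by a factor of $p^e$. Restricting to a general member $D=D_t$ of the covering family (along which $X$ is smooth by hypothesis) and comparing against $\omega_D$, the arithmetic inequality $((p-1)L-K_X)\cdot D_t>0$ is precisely what makes $(F^e)^*E|_D\otimes \omega_D^{-1}$ acquire a global section for some controlled $e$. Such a section produces a non-constant morphism $f\colon\widetilde{D}\to X$ from a smooth curve $\widetilde{D}$, with $f^*(-K_X)\cdot \widetilde{D}$ forced to exceed $\dim X$. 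The covering property of the family $\{D_t\}$ lets us arrange $x\in f(\widetilde{D})$.

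Third, with such an $f$ in hand, Mori's bend-and-break (e.g.\ \cite[Chapter II]{KollarMori}) produces a rational curve $C\ni x$ whose $L$-degree is controlled: the factor $2\dim X$ is the standard constant arising when one fixes a single point and deforms until a component breaks off, and the ratio $(L\cdot D_t)/((p-1)L-K_X)\cdot D_t$ tracks the minimal number of Frobenius iterations weighted against the input degree $L\cdot D_t$. I expect the main technical obstacle to be producing the rational curve \emph{through the prescribed point} $x$ while simultaneously preserving the bound on $L\cdot C$: bend-and-break generically sacrifices degree whenever it breaks off a rational component, so balancing that loss against the gain from the Frobenius destabilisation, and checking that the two effects combine into the stated ratio rather than something weaker, is the technical core of Koll\'ar's argument in \cite[Chapter II]{kollarcurves}.
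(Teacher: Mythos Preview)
The paper does not contain a proof of this theorem: it is quoted from Koll\'ar's book \cite{kollarcurves} and invoked as a black box in the proof of Proposition~\ref{proposition:effective_vanishing}. There is therefore nothing in the paper to compare your sketch against.

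As a sketch of Koll\'ar's own argument, your list of ingredients---the non-split extension coming from $H^1(X,\mcO_X(-L))\neq 0$, Frobenius amplification, and bend-and-break---is correct, but the mechanism you describe in the second and third steps is garbled. A global section of $(F^e)^*E|_D\otimes\omega_D^{-1}$ does not ``produce a non-constant morphism $f\colon\widetilde{D}\to X$'': the curve $D$ already sits inside $X$, and a section of a rank-two bundle on $D$ yields no new map to $X$, let alone one with prescribed $(-K_X)$-degree. What you have written conflates Koll\'ar's argument with Mori's original Frobenius trick on $X$ itself, in which the extension $E$ plays no role. In Koll\'ar's proof one instead passes to the $\mbP^1$-bundle $Y=\mbP(E)\to X$: the non-split extension provides a section $X\hookrightarrow Y$ with normal bundle governed by $L$, the curves $D_t$ are lifted to $Y$ via this section, Frobenius on $Y$ makes their $(-K_Y)$-degree large relative to their genus (this is where $((p-1)L-K_X)\cdot D_t>0$ enters, via the canonical-bundle formula for $\mbP(E)$), bend-and-break on $Y$ produces rational curves there, and non-splitting of the extension forces these to project to honest rational curves in $X$ rather than collapse to points. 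The bound on $L\cdot C$ then falls out of the standard bend-and-break estimate upstairs.
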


\begin{proof}[{Proof of Proposition \ref{proposition:effective_vanishing}}]
Set $L = 2K_X + 14A+N$. Since $D$ is Cartier and $\omega_X$ is reflexive, we get that $\omega_X \otimes \mcO_X(-D) = \mcO_X(-L)$. Thus, by Serre duality, we need to show $H^i(X,\mcO_X(-L)) = 0$ for $i<2$. However, by cone theorem, we have that $K_X+3A$ is nef, and so $L$ is ample, giving in particular that $H^0(X, \mcO_X(-L)) = 0$. Hence, we are left to show $H^1(X,\mcO_X(-L))=0$.

To this end, we suppose by contradiction that $H^1(X,\mcO_X(-L))\neq 0$ and apply Theorem \ref{theorem:kollar_surfaces} for a general pencil of curves $\{D_t\}$ in some very ample linear system. Since $(p-1)L-K_X$ is ample, the assumptions of the theorem are satisfied. In particular, we get a curve $C$ such that
\[
L \cdot C \leq 4 \frac{L \cdot D_t}{((p-1)L-K_X) \cdot D_t},
\]
and, as $L$ is ample, this in turn gives
\[
L \cdot C  \leq 4 \frac{L \cdot D_t}{(L - K_X) \cdot D_t}.
\]

Since $K_X + 3A$ is nef, we have $L\cdot C \geq 8$. Further, $K_X \cdot D_t < (L-K_X)\cdot D_t$, as $L-K_X = K_X + 14A+N$. Therefore,
\[
8 \leq L \cdot C < 4\Bigg(\frac{2(L-K_X)\cdot D_t}{(L-K_X)\cdot D_t}\Bigg) = 8, 
\]
which is a contradiction.
\end{proof}
\begin{remark} \label{remark:effective_vanishing} Proposition \ref{proposition:effective_vanishing} shows, under assumptions and notation of Theorem \ref{theorem:main-mini}, that the very ample divisor $H \coloneq 13mK_X + 45mA$ satisfies $H^i(X,H) = 0$ for $i>0$. Similar statements hold for very ample divisors considered in Theorem \ref{theorem:main}. 
\end{remark}    

\section*{Acknowledgements}
I would like to express my enormous gratitude to Paolo Cascini for his indispensable help. His guidance and suggestions played a crucial role in my research.

Further, I would like to thank Hiromu Tanaka for our ample discussions, from which I benefited immensely. I also thank Roland Abuaf, Andrea Fanelli, Ching-Jui Lai and the anonymous referee for comments and helpful suggestions.

\bibliographystyle{amsalpha}
\bibliography{EffectiveBoundsJakubWitaszek}

\end{document}